\theoremstyle{plain}
\newtheorem{thm}{Theorem}[section]
\newtheorem{prop}[thm]{Proposition}
\newtheorem{lmm}[thm]{Lemma}
\theoremstyle{definition}
\newtheorem{dfn}[thm]{Definition}
\newtheorem{set}[thm]{Setting and Notation}
\DeclareMathOperator{\an}{an}
\DeclareMathOperator{\sm}{sm}
\DeclareMathOperator{\NA}{NA}
\DeclareMathOperator{\Frac}{Frac}
\DeclareMathOperator{\Spec}{Spec}
\DeclareMathOperator{\Proj}{Proj}
\DeclareMathOperator{\ord}{ord}
\DeclareMathOperator{\PGL}{PGL}
\DeclareMathOperator{\red}{red}
\DeclareMathOperator{\T}{T}
\DeclareMathOperator{\Span}{Span}
\DeclareMathOperator{\Supp}{Supp}
\newcommand{\bA}{\mathbb{A}}
\newcommand{\bC}{\mathbb{C}}
\newcommand{\bD}{\mathbb{D}}
\newcommand{\bN}{\mathbb{N}}
\newcommand{\bP}{\mathbb{P}}
\newcommand{\bQ}{\mathbb{Q}}
\newcommand{\cJ}{\mathcal{J}}
\newcommand{\cO}{\mathcal{O}}
\title{Meromorphic degeneration of rational functions over snc models of the projective line}
\author{Reimi Irokawa}
\address{NTT Institute for Fundamental Mathematics, NTT Communication Science Laboratories, Nippon Telegraph and Telephone Corporation, 3-9-11 Midori-cho, Musashino-shi, Tokyo 180-8686, Japan}
\email{reimi.irokawa@ntt.com}
\subjclass[2020]{Primary: 37F10, Secondary: 14G22, 37P30, 32A08}
\keywords{complex dynamics; non-archimedean dynamics; complex geometry}
\date{2025/1/31}
\begin{document}

\begin{abstract}
    For an analytic family $\{f_t\}_{t\in\bD^*}$ on the unit punctured disk that meromorphically degenerates at the origin, we show that its limiting measure on an snc model is given by the push forward of the canonical measure attached to the non-archimedean rational function naturally induced from the family, which is a generalization of the results by DeMarco-Faber and Okuyama.
\end{abstract}
\maketitle
\section{Introduction}

Let $\{f_t:\bP^1(\bC)\to\bP^1(\bC)\}_{t\in\bD^*}$ is an analytic family of rational functions of degree $d\geq2$, which degenerates at the origin, i.e., the limit function $f_0$ has a degree strictly less than $d$. In this paper, we study its degeneration feature.

We consider a family of probability measures $\{\mu_t\}$, where $\mu_t$ is the measure of maximal entropy, or the canonical measure, attached to $f_t$ for any $t\in\bD^*$. By Ma{\~n}e's Continuity theorem, the family of measures $\{\mu_t\}$ is weakly continuous as long as they do not degenerate. In the degenerate case, their limit is studied in several articles such as \cite{DeMarco:2005pd}, \cite{Kiwi_cubic}, \cite{Kiwi:2014nr}, \cite{DE-MARCO:2014tn}, \cite{Favre:2020rz}, \cite{article}, and \cite{okuyama2024intrinsicreductionsintrinsicdepths}. They use non-archimedean techniques to obtain the weak limit of the family $\{\mu_t\}$. That is, the family $\{f_t\}$ induces an analytic dynamics $f_{\bC((t))}^{\an}:\bP^{1,\an}_{\bC((t))}\to\bP^{1,\an}_{\bC((t))}$ by the Berkovich geometry \cite{berk1990}. Favre and Rivera-Letelier studied such dynamical systems \cite{Charles:2009be}, In particular, they constructed the invariant measure $\mu_f^{\NA}$, which has properties similar to the canonical measure in complex dynamics, and it plays a central role in the study of degeneration. Especially, DeMarco-Faber and Okuyama shows the following result:
\begin{prop}[\cite{DE-MARCO:2014tn}, Theorem B]\label{prop: intro - demarcofaber}
    Let $\{f_t\ |\ t\in\bD\}$ be a meromorphic family of rational functions of degree $d\geq2$ that degenerates at $t=0$. The family of measures $\{\mu_t\}_{t\in\bD^*}$ converges weakly on $\bP^1(\bC)$ to a countable sum of atomic measures $\mu_0$ as $t\to0$. The measure $\mu_0$ is given by the push-forward measure of $\mu_f^{\NA}$ by the reduction map $\red:\bP^{1,\an}_{\bC((t))}\setminus\{\zeta_{0,1}\}\to\bP^1(\bC)$.
\end{prop}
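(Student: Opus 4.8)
The plan is to reduce the convergence of measures to a convergence of potentials, and to compare the complex and non-archimedean pictures through a common ambient structure (a hybrid space attached to an snc model) interpolating between the fibers $\bP^1(\bC)$ at $t\neq 0$ and the Berkovich fiber $\bP^{1,\an}_{\bC((t))}$ at $t=0$. First I would lift the family $\{f_t\}$ to a meromorphic family of homogeneous degree-$d$ maps $F_t\colon\bC^2\to\bC^2$ and introduce the dynamical Green's functions (escape rates) $G_t(z)=\lim_{n}d^{-n}\log\|F_t^n(z)\|$, so that the canonical measure is recovered as $\mu_t=dd^c g_t$ for the descended potential $g_t$ on $\bP^1(\bC)$. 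On the non-archimedean side, the induced map $f_{\bC((t))}^{\an}$ has, by Favre--Rivera-Letelier, an equilibrium potential $g^{\NA}$ on $\bP^{1,\an}_{\bC((t))}$ with $\mu_f^{\NA}=\Delta g^{\NA}$ for the tree Laplacian $\Delta$.

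The heart of the argument is a uniform comparison of these potentials as $t\to0$. Writing points of the nearby fibers in a fixed coordinate and rescaling the archimedean absolute value by $-\log|t|$, I would show that $g_t$, suitably normalized, converges to $g^{\NA}$ in the hybrid topology --- equivalently, that the escape rates of algebraic sections (marked critical points, or a generic point together with its preimages) converge to their non-archimedean values. This rests on quantitative escape-rate estimates: one controls $d^{-n}\log\|F_t^n\|$ uniformly in $t$ near $0$ using that the coefficients of $F_t$ are meromorphic in $t$, so the leading $t$-adic behavior of the iterates is detected by the valuations defining $f_{\bC((t))}^{\an}$. I expect this uniform control to be the main obstacle, since it requires ruling out cancellation and loss of mass in the iterates precisely along the degenerating directions.

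Granting the convergence of potentials, I would pass to measures by continuity of $dd^c$ and the Laplacian with respect to the relevant weak topologies (the Chambert-Loir--Ducros calculus on the hybrid space), giving $\mu_t=dd^c g_t\to \Delta g^{\NA}=\mu_f^{\NA}$ across the family. To turn this into the stated convergence on $\bP^1(\bC)$, I would use that the specialization from the nearby complex fibers to the central Berkovich fiber is governed by the reduction map: every non-Gauss point of $\bP^{1,\an}_{\bC((t))}$ lies in a unique open residue disk, and these disks partition $\bP^{1,\an}_{\bC((t))}\setminus\{\zeta_{0,1}\}$, each collapsing to its reduction point in $\bP^1(\bC)$. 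Thus the $\mu_t$-mass accumulating in a residue disk concentrates, as $t\to0$, at the corresponding reduction point, yielding weak convergence $\mu_t\to\sum_{P}\mu_f^{\NA}(\red^{-1}(P))\,\delta_P=\red_*\mu_f^{\NA}$, a countable sum of atoms.

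The final point to verify is that $\mu_0=\red_*\mu_f^{\NA}$ is a genuine probability measure, i.e. that no mass is lost at the Gauss point, where $\red$ is undefined. This amounts to showing $\mu_f^{\NA}(\{\zeta_{0,1}\})=0$, which I would deduce from the degeneration hypothesis: since $f_0$ has degree strictly less than $d$, the map $f_{\bC((t))}^{\an}$ has bad reduction at $\zeta_{0,1}$, so by Rivera-Letelier's description of the atoms of the equilibrium measure it cannot charge $\zeta_{0,1}$. Conservation of total mass on the compact space $\bP^1(\bC)$ then forces $\mu_0$ to be a probability measure, consistent with the weak limit of the probability measures $\mu_t$. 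Carefully matching the atomic masses with the residue-disk masses of $\mu_f^{\NA}$, including the contribution of its diffuse part on the tree, completes the identification.
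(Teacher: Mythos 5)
First, a point of comparison: the paper does not prove this proposition at all --- it is imported verbatim as Theorem B of DeMarco--Faber \cite{DE-MARCO:2014tn} (with Okuyama's work as an alternative source) and is only used as a black box in Lemma \ref{lemma: main thm - main thm for minimal models}. So your attempt can only be measured against the literature. Your outline is recognizably the hybrid-space route of Favre \cite{Favre:2020rz}, not DeMarco--Faber's original argument, and its architecture is sound: Green's functions on both sides, convergence of rescaled potentials in the hybrid topology, concentration of mass in residue disks, and the verification that $\mu_f^{\NA}(\{\zeta_{0,1}\})=0$. That last point you get right, and for the right reason: meromorphic degeneration means $f$ fails to have good reduction in the given coordinate, so $\zeta_{0,1}$ is not totally invariant, and by Favre--Rivera-Letelier \cite{Charles:2009be} the equilibrium measure then has no atom there; note this uses only degeneration, not genuinely bad reduction, exactly as the proposition requires.

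The genuine gap is that the heart of the matter --- the uniform comparison of the complex potentials $g_t$ with the non-archimedean potential $g^{\NA}$, equivalently the hybrid convergence $\mu_t\to\mu_f^{\NA}$ --- is asserted rather than proved. What you write about it (``quantitative escape-rate estimates'', ``the leading $t$-adic behavior of the iterates is detected by the valuations'') is a restatement of the goal, not a mechanism. The actual difficulty is that the one-step error $\log\|F_t(Z)\|-d\log\|Z\|$ is bounded, for fixed $t$, only by a quantity that blows up like a multiple of $\log|t|^{-1}$ as $t\to0$ (it is governed by the resultant of $F_t$, which vanishes at $t=0$ precisely because the family degenerates), and one must show that after dividing by $d^n$ and rescaling by $\log|t|^{-1}$ these errors telescope uniformly to the non-archimedean escape rate. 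That is the entire content of Favre's (or DeMarco--Faber's) theorem, so as written your proposal reduces the statement to a theorem of the same depth. A second, smaller gap: the passage from hybrid weak convergence to weak convergence on $\bP^1(\bC)$ is not ``continuity of $dd^c$''; the natural test function $\phi\circ\red$ is discontinuous at $\zeta_{0,1}$ and the hybrid/Berkovich space is not metrizable, so one needs an explicit portmanteau-type argument --- for instance testing against hybrid-continuous functions of the form $(z,t)\mapsto\theta\bigl(\log|z-P|/\log|t|\bigr)$ to obtain $\liminf_{t\to0}\mu_t(D(P,\delta))\geq\mu_f^{\NA}(\red^{-1}(P))$, then summing over finitely many residue disks carrying mass at least $1-\epsilon$. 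This step does close, and it closes precisely because of the no-mass-at-the-Gauss-point fact you established, but it must be written out rather than attributed to general functoriality.
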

The definition of the reduction map will be presented in Section \ref{sec: sncmodels - background}. Our result is a generalization of this result; while they consider $\{\mu_t\}$ as a family of measures on $\bP^1(\bC)$, we may also regard $\mu_t$ as a measure on $\bP^1(\bC)\times \{t\}\subset \bP^1(\bC)\times \bD^*$. In this sense, their result describes the weak limit of the measures on $\bP^1(\bC)\times \bD$, the trivial model of $\bP^1(\bC)\times \bD^*$. We study the same weak limit, but over any snc model $X$ of $\bP^1(\bC)\times \bD^*$, which means $X|_{t\neq0}\simeq\bP^1(\bC)\times \bD^*$ and $X_0=X|_{t=0}$ has only simple normal crossing singularities. In this case, the reduction map $\red_{X}:\bP^{1,\an}_{\bC((t))}\setminus S\to X_0$ is defined as well as DeMarco-Faber and Okuyama's arguments, where $S$ is a finite set of points in $\bP^{1,\an}_{\bC((t))}$ determined from $X$ (for details; see Section \ref{sec: sncmodels - background}. The statement of our main result is as follows:
\begin{thm}
    Let $\{f_t\}_{t\in\bD^*}$ be an analytic family of degree $d\geq2$ which has genuenly bad reduction, and $X$ be an snc model of $\bP^1(\bC)\times \bD^*$. We regard the canonical measure $\mu_t$ attached to $f_t$ as the measure on $X_t$ for each $t$. Consider its induced non-archimedean dynamics $f_{\bC((t))}^{\an}$ and let $\mu_f^{\NA}$ be the canonical measure. Then, we have the following weak convergence of measures on $X$:
    \begin{align*}
        \lim_{t \to0}\mu_t= (\red_X)_*\mu_f^{\NA}.
    \end{align*}
\end{thm}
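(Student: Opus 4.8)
The plan is to realize $\{\mu_t\}$ and $\mu_f^{\NA}$ on a single ambient space and transport the convergence to $X$ through $\red_X$. For orientation, the blow-down $p\colon X\to\bP^1(\bC)\times\bD$ (an isomorphism over $\bD^*$) satisfies $p_*\mu_t=\mu_t$, so Proposition~\ref{prop: intro - demarcofaber} already identifies every weak limit on the trivial model and thereby pins down the limit on $X$ away from the exceptional fibers $p^{-1}(a_j)$ lying over the finitely many blown-up points $a_j\in\bP^1(\bC)$. Moreover, since the $\bD$-coordinate of $\Supp\mu_t$ tends to $0$, any weak limit on $X$ is carried by $X_0$. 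Thus the genuinely new content is how the mass that collapses onto a single $a_j$ in the trivial model redistributes among the components of $p^{-1}(a_j)\subset X_0$, and I would capture this uniformly by working on the hybrid space $\bP^{1,\hyb}$ over $\bD$ of Boucksom--Jonsson: a compact space fibered over $\bD$ whose fiber over $t\neq0$ is $X_t\simeq\bP^1(\bC)$ and whose central fiber is a rescaling of $\bP^{1,\an}_{\bC((t))}$, so that all of $\mu_t$ and $\mu_f^{\NA}$ live on it.

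The analytic heart is the hybrid equidistribution $\mu_t\to\mu_f^{\NA}$ on $\bP^{1,\hyb}$, which I would prove by comparing the normalized complex Green functions of $f_t$ with the non-archimedean Green function of $f^{\an}_{\bC((t))}$ --- the potential theory already underlying Proposition~\ref{prop: intro - demarcofaber}, now read on the hybrid fiber rather than pushed forward to $\bP^1(\bC)$. I would then assemble the map $r\colon\bP^{1,\hyb}\to X$ that is the identity on each $X_t$ ($t\neq0$) and equals $\red_X$ on the central fiber, and verify that $r$ is continuous off the finite set $S$ of divisorial points attached to the components of $X_0$. The point is that in the monomial charts of $X$ (local equations $x=t^m u$ near a component and $xy=t^n$ near a node) the hybrid topology is exactly the specialization dictated by $\red_X$; along a skeletal edge $\red_X$ is locally constant, equal to the corresponding node of $X_0$, so the only discontinuities sit at the vertices, which constitute $S$.

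To conclude I would invoke the continuous mapping theorem, for which I need $\mu_f^{\NA}(S)=0$. This is exactly where the genuinely bad reduction hypothesis is used: over a residue field of characteristic zero a map that is not potentially good has non-atomic canonical measure, so the finitely many points of $S$ are null. Hence $r$ is continuous $\mu_f^{\NA}$-almost everywhere; since $r_*\mu_t=\mu_t$ and $r_*\mu_f^{\NA}=(\red_X)_*\mu_f^{\NA}$, the continuous mapping theorem gives $\mu_t\to(\red_X)_*\mu_f^{\NA}$ on $X$. I expect the main obstacle to be the chart-level compatibility of the preceding paragraph together with proving the hybrid equidistribution with enough uniformity to license the continuous mapping theorem (rather than merely the pushforward statement on $\bP^1(\bC)$): one must show that a sequence $x_t\in X_t$ with hybrid limit $\xi\in\bP^{1,\an}_{\bC((t))}\setminus S$ converges in $X$ to $\red_X(\xi)$, and that no mass of $\mu_t$ concentrates near $S$. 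Once these are in place, the independence of the statement from the chosen snc model is formal.
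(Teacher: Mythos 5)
Your strategy is sound, but it is genuinely different from the paper's, and one of your auxiliary claims is false. The paper never leaves the category of models: it first proves the theorem for \emph{minimal} models (Lemma \ref{lemma: main thm - main thm for minimal models}), using Proposition \ref{prop: extension of PGL(2,K)} to transport DeMarco--Faber's statement (Proposition \ref{prop: intro - demarcofaber}) through a $\PGL(2,K)$-conjugation; for a general snc model $X$ it then contracts, for each component $E_i$ of $X_0$, all the other components to get minimal models $X^{(i)}$, and pins down every weak limit $\rho$ of $\{\mu_t\}$ by the finitely many relations $(\pi_i)_*\rho=\mu_{E_i}$, which $(\red_X)_*\mu_f^{\NA}$ satisfies by Lemma \ref{lmm: background - contraction - compatibility} and the fact that genuinely bad reduction forces $\mu_f^{\NA}(\{\eta_1,\dots,\eta_k\})=0$. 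You instead go through the strictly stronger statement that $\mu_t\to\mu_f^{\NA}$ on the hybrid space $\bP^{1,\hyb}$ and push it down by an a.e.\ continuous map $r$. This buys model-independence in one stroke (and would generalize to $\bP^k$), but the ``analytic heart'' you propose to prove is exactly Favre's hybrid equidistribution theorem, i.e.\ \cite{Favre:2020rz}, which the paper already cites in its introduction; citing it rather than re-deriving it via Green functions is what makes your route viable, since re-proving it is far heavier than anything the paper does (the paper only consumes the pushforward statement of \cite{DE-MARCO:2014tn}). Your chart-level continuity claim for $r$ off the divisorial points, and your use of genuinely bad reduction to get $\mu_f^{\NA}(S)=0$ (Proposition \ref{prop: background - basic facts from non-archimedean dynamics}), are both correct and parallel the paper's use of the same facts.

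Two concrete defects, both repairable. First, your orientation paragraph is wrong: a general snc model $X$ of $\bP^1(\bC)\times\bD^*$ need \emph{not} admit a blow-down $p\colon X\to\bP^1(\bC)\times\bD$ compatible with the identification over $\bD^*$. An elementary transformation (blow up a point of $\bP^1(\bC)\times\{0\}$, then contract the strict transform of $\bP^1(\bC)\times\{0\}$, which has become a $(-1)$-curve) produces a minimal model that does not dominate the trivial one; this is precisely why the paper needs Proposition \ref{prop: extension of PGL(2,K)} instead of a blow-down. Your actual argument never uses $p$, so simply delete that paragraph. Second, the ``continuous mapping theorem'' must be applied in its Radon/compact-Hausdorff form, not the metric-space form: $\bP^{1,\an}_{\bC((t))}$, hence the hybrid space, is compact Hausdorff but \emph{not} metrizable (the uncountably many disjoint open balls corresponding to distinct residues in $\bC$ violate second countability). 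The argument survives because the discontinuity locus of $r$ is the finite set $S$ with $\mu_f^{\NA}(S)=0$: enclose $S$ in closed neighborhoods of small $\mu_f^{\NA}$-mass, use the portmanteau inequality $\limsup_{t\to0}\mu_t(F)\leq\mu_f^{\NA}(F)$ for closed $F$ (valid for Radon measures on compact Hausdorff spaces, by outer regularity and Urysohn), and extend test functions continuously off those neighborhoods. Relatedly, $r$ is not defined on $S$ as a map to closed points ($\red_X$ sends $\eta_i$ to the generic point of $E_i$), so fix it arbitrarily there; this is harmless for the same reason.
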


The notion of genuinely bad reduction is a slightly stronger condition than meromorphic degeneration at $t=0$, which will be presented in Definition \ref{definiton: background - good reduction}. The plan of this paper is as follows. We gather basic properties and facts for everything we need to show our main theorem in Section \ref{sec: sncmodels - background}, and show it in Section \ref{sec: proof of the main theorem}. In Section \ref{sec: example}, we see what happens to various snc models in the case of a family of quadratic polynomials.

\subsection*{Acknowledgment}
I would like to thank Prof. Yusuke Okuyama and Prof. Hiroyuki Inou for their useful discussions. I also appreciate Teppei Takamatsu's advice on algebraic geometric arguments.

\section{Berkovich geometry and reduction map} \label{sec: sncmodels - background}

In this section, we briefly recall the notions of snc models and their relation to Berkovich geometry in the case of the trivial family of the projective lines. General references for snc models are \cite{Shafarevich1966LecturesOM} and \cite{Artin:1971qr}, and those for Berkovich geometry are \cite{berk1990}.

\subsection{Berkovich analytification of the projective line}
Let $R=\cO_0$ be the set of one variable (complex) holomorphic functions defined around the origin of $\bC$, and $K=\Frac R$ be its fractional field, i.e., the set of one variable meromorphic functions defined around the origin. If we write a coordinate around the origin by $t$, taking the Laurent expansion around the origin enables us to embed $K$ to the formal Laurent series field $\bC((t))$ over $\bC$. We introduce the $t$-adic norm $|\cdot|$ on $\bC((t))$ as follows: 
\begin{align*}
    \left|\sum_{n=-m}^{\infty}a_nt^n\right|=\exp(-m),
\end{align*}
where $a_{-m}\neq0$ is the smallest degree with non-zero coefficient. $(\bC((t)),|\cdot|)$ is then a complete, non-trivial and non-archimedean valuation field. For algebraic varieties over $\bC((t))$, we may consider its {\it Berkovich analytification}, in this case, the Berkovich projective line $\bP_{\bC((t))}^{1,\an}$. As a topological space, the Berkovich projective line is given by the one-point compactification of the Berkovich affine line $\bA^{1,\an}_{\bC((t))}$. The Berkovich affine line $\bA^{1,\an}_{\bC((t))}$ is defined as the set of multiplicative seminorms on $\bC((t))[T]$ whose restriction to $\bC((t))$ equals the given $t$-adic norm $|\cdot|$. For a seminorm $x\in\bA^{1,\an}_{\bC((t))}$, we denote the evaluation at $f\in \bC((t))[T]$ by $|f(x)|$. This notation is compatible with the so-called {\it classical points}; any point $\alpha(t)\in\bC((t))$ defines a multiplicative seminorm $f\mapsto |f(\alpha(t))|$, and same type of norms can be defined for all the closed point of $\bP^1_{\bC((t))}$. Note that $\bP^{1,\an}_{\bC((t))}$ naturally contains the set of classical point; the point added in the compactification corresponds to $\infty\in\bP^1(\bC((t)))$.

A rational function $F\in \bC((t))(T)$ defines an (algebraic) endomorphism $F:\bP^1_{\bC((t))}\to \bP^1_{\bC((t))}$. By the analytification, $F$ also extends to the continuous map $F^{\an}:\bP^{1,\an}_{\bC((t))}\to\bP^{1,\an}_{\bC((t))}$ by $F(x)=x\circ F$ for $x\in\bP^{1,\an}_{\bC((t))}$. Especially, an analytic family $f_t(z):\bP^1(\bC)\times\bD^*\to \bP^1(\bC)\times\bD^*$ of rational functions on $\bP^1(\bC)$ of degree $d$ may be regarded as a rational function $f_K$ with coefficients in $K$, which is naturally embedded into $\bC((t))$. The analytification of rational function $f_K$ is denoted by $f_K^{\an}:\bP^{1,\an}_{\bC((t))}\to\bP^{1,\an}_{\bC((t))}$.

We remark that we can actually consider the analytic structure, the sheaf of analytic functions, on the Berkovich spaces, and we may regard $F^{\an}$ as an analytic morphism. We omit details of them here as we will not use them.

\subsection{Tree structure of Berkovich projective line}\label{subsec: sncmodels - background - treestructure}

One of the good properties of Berkovich curves is that they have a (limit of) graph structure. To see it, we first list up all the points belonging to $\bA^{1,\an}_{\bC((t))}$.

As written above, any closed points of $\bA^1_{\bC((t))}$ defines a point in $\bA^{1,\an}_{\bC((t))}$, which we call a type 1 point. For a closed point $x$ and $r>0$, set $\zeta_{x,r}\in\bA^{1,\an}_{\bC((t))}$ by
\begin{align*}
    |F(\zeta_{x,r})|:=\max_{|x-z|\leq r}|F(z)|,
\end{align*}
which is known to be a multiplicative norm on $K[T]$, i.e., a point corresponding to a closed disk centered at $x$ and radius $r$. We note that $z$ runs all the closed points. Since the $t$-adic norm extends uniquely to the algebraic closure of $\bC((t))$, the value $|z-x|$ is well-defined. We call $\zeta_{x,r}$ is of {\it type 2} if $r\in|\overline{\bC((t))}|=\exp(\bQ)$, where $|\overline{\bC((t))}|$ is the value group of the algebraic closure of $\bC((t))$, and of {\it type 3} otherwise. Also, for a sequence $\zeta_i=\zeta_{x_i,r_i}$ where $r_i>0$ is decreasing, its limit
\begin{align*}
    |F(\zeta)|:=\lim_{i\to\infty}|F(\zeta_i)|
\end{align*}
also defines a multiplicative seminorm. When $r_i\to0$, $\{x_i\}_i$ is a Cauchy sequence and there exists a limit $x$ on the completion of $\overline{\bC((t))}$, which we also call type 1. We note that the completion of the algebraic closure of $\bC((t))$ is the minimal algebraically closed field containing it, whose norm is uniquely determined from that of $\bC((t))$. When a decreasing sequence of closed disks has an empty intersection, it defines a point neither of type 1,2 nor 3. Such a limit point $\zeta$ is called type 4. It is known that there are no other points on $\bA^{1,\an}_{\bC((t))}$.

By the ultrametric inequality, for any pair of two closed disks, exactly one of the following holds: one includes another, or they have no intersection. This makes $\bP^{1,\an}_{\bC((t))}$ into a tree structure; in particular, the space $\bP^{1,\an}_{\bC((t))}$ is uniquely path connected. For any $x$ of type 2, we set $\T_x(\bP^{1,\an}_{\bC((t))})$ be the set of connected components of $\bP^{1,\an}_{\bC((t))}\setminus \{x\}$, called the {\it tangent space} at $x$. Especially, the tangent space at $\zeta_{0,1}$ has a natural bijection to $\bP^1(\bC)$, since every component of $\bA^{1,\an}_{\bC((t))}\setminus \{\zeta_{0,1}\}$ must contain at most one constant by construction, and the unique component which does not contain any constant corresponds to $\infty\in\bP^1(\bC)$. 

\subsection{Non-archimedean dynamics}
We briefly recall the dynamics over the non-archimedean projective line. Let $f_t(z)\in \bC((t))[z]$ be a rational function of degree $d\geq2$. The analytification $f^{\an}$ of $f$ acts on $\bP^{1,\an}_{\bC((t))}$, which defines a dynamical system. For $f$, we introduce the notion of good reduction:
\begin{dfn}\label{definiton: background - good reduction}
    We say $f_t$ has {\it good reduction} if $\deg f_0=d$, and {\it potentially good reduction} if there exists a finite field extension $L$ of $\bC((t))$ and $\tau\in\PGL(2,K)$ such that $f_{t,\tau}:=\tau\circ f\circ \tau^{-1}\in L(z)$ has good reduction. We say $f_t$ has a {\it genuinely bad reduction} if $f_t$ does not have a potentially good reduction. 
\end{dfn}
We note that when $f_t$ has good reduction, $f_t$ extends to $f_t:\bP^1_{\bC[[t]]}\to\bP^1_{\bC[[t]]}$ while the extension has indeterminacy otherwise. Whether or not the map $f_t$ has a good reduction, the fundamental properties of its dynamical system are studied by Rivera-Letelier and Favre \cite{Charles:2009be}. Especially the canonical measure is defined in the same way as the complex case, and it detects maps with potentially good reduction:
\begin{prop}\label{prop: background - basic facts from non-archimedean dynamics}
    For any $f_t(z)\in\bC((t))(z)$ of degree $d\geq2$, there exists a unique probability measure $\mu_f^{\NA}$ such that 
    \begin{itemize}
        \item $\mu_f^{\NA}$ has no mass on the exceptional set of $f$;
        \item $(f^{\an})_*\mu_f^{\NA}=\mu_f^{\NA}$;
        \item $(f^{\an})^*\mu_f^{\NA}=d\cdot\mu_f^{\NA}$.
    \end{itemize}
    The Julia set of $f$ coincides with the support of $\mu_f^{\NA}$. Also, when $f_t(z)$ has good reduction, $\mu_f^{\NA}=\delta_{\zeta_{0,1}}$. Moreover, $\mu_f^{\NA}$ has a mass on type 2 points if and only if $f$ has a potentially good reduction. In this case, i.e., there exists a type 2 point $\zeta$ such that  $\mu_f^{\NA}=\delta_{\zeta}$, the conjugacy of $f$ by $\tau\in \PGL(2,\overline{\bC((t))})$ has good reduction as long as $\tau$ maps $\zeta$ to $\zeta_{0,1}$.
\end{prop}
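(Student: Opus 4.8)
The plan is to follow the potential-theoretic construction of Favre and Rivera-Letelier underlying \cite{Charles:2009be}, of which I sketch the main steps. The central tool is the Laplacian $\Delta$ on $\bP^{1,\an}_{\bC((t))}$, built from the tree structure recalled in Section~\ref{subsec: sncmodels - background - treestructure}: to a function of bounded differential variation it associates a signed measure of total mass zero, and it is functorial in the sense that $\Delta(u\circ f^{\an})=(f^{\an})^{*}(\Delta u)$, where $(f^{\an})^{*}$ denotes the pullback of measures weighted by local degrees. To build $\mu_f^{\NA}$ I would lift $f_t$ to a homogeneous degree-$d$ map $F=(F_0,F_1)$ on $\bA^{2}_{\bC((t))}$ and form the homogeneous Green function
\begin{align*}
    G_F=\lim_{n\to\infty}\frac{1}{d^{n}}\log\lVert F^{n}\rVert,
\end{align*}
the uniform limit existing because $d^{-1}\log\lVert F\rVert-\log\lVert\cdot\rVert$ is bounded. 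The functional equation $G_F\circ F=d\,G_F$ holds by construction, and $\mu_f^{\NA}$ is the associated Laplacian (Monge--Amp\`ere) measure after the standard descent from $\bA^{2}$ to $\bP^{1,\an}_{\bC((t))}$, normalized to a probability measure. Equivalently $\mu_f^{\NA}=\lim_{n}d^{-n}((f^{\an})^{n})^{*}\delta_{\zeta_{0,1}}$, and energy estimates for $G_F$ show this limit is independent of the base point.

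Granting the construction, the invariance identities are formal: applying $\Delta$ to $G_F\circ F=d\,G_F$ and using functoriality gives $(f^{\an})^{*}\mu_f^{\NA}=d\,\mu_f^{\NA}$, and since $f^{\an}_{*}(f^{\an})^{*}=d\cdot\mathrm{id}$ on measures, pushing forward yields $(f^{\an})_{*}\mu_f^{\NA}=\mu_f^{\NA}$. For the absence of mass on the exceptional set $E_f$ — a totally invariant set of at most two classical points since $d\geq2$ — I would observe that such points are superattracting, hence lie in the Fatou set and outside $\Supp\mu_f^{\NA}$. Uniqueness is where the argument has real content: if $\mu'$ is another probability measure with no mass on $E_f$ and $(f^{\an})^{*}\mu'=d\,\mu'$, then the difference of the potentials of $\mu'$ and $\mu_f^{\NA}$ is fixed up to the scaling by $d$, and the equidistribution statement $d^{-n}((f^{\an})^{n})^{*}\delta_{x}\to\mu_f^{\NA}$ for every $x\notin E_f$ forces $\mu'=\mu_f^{\NA}$. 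I expect this equidistribution/energy step to be the main obstacle, since it requires quantitative continuity of the potentials and precise control of the Laplacian (and its constant kernel) on the Berkovich tree.

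For the identification $J_f=\Supp\mu_f^{\NA}$ I would invoke the Fatou--Julia theory on $\bP^{1,\an}_{\bC((t))}$: the inclusion $\Supp\mu_f^{\NA}\subseteq J_f$ holds because the Fatou set is the maximal open locus of equicontinuity and carries no mass of a totally invariant measure, while the reverse inclusion follows from the repelling behaviour along $J_f$ together with equidistribution of preimages, which forbids a mass-free neighbourhood of any Julia point. In the good reduction case $\deg f_0=d$, the map $f^{\an}$ fixes $\zeta_{0,1}$ with local degree $d$, so $(f^{\an})^{*}\delta_{\zeta_{0,1}}=d\,\delta_{\zeta_{0,1}}$ and uniqueness gives $\mu_f^{\NA}=\delta_{\zeta_{0,1}}$. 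Finally, for the dichotomy, if $\mu_f^{\NA}$ charges a type~2 point $\zeta$, total invariance forces $\zeta$ to be (pre)periodic, and after replacing $f$ by an iterate one sees that a type~2 atom cannot be split by a degree-$d$ pullback without violating $(f^{\an})^{*}\mu_f^{\NA}=d\,\mu_f^{\NA}$; hence $\mu_f^{\NA}=\delta_{\zeta}$. Choosing $\tau\in\PGL(2,\overline{\bC((t))})$ with $\tau(\zeta)=\zeta_{0,1}$ (possible precisely because a type~2 point lies in the $\PGL$-orbit of the Gauss point over the algebraically closed field) transports this to $\delta_{\zeta_{0,1}}$, so $f_{t,\tau}$ has good reduction by the previous case and $f_t$ has potentially good reduction; the converse direction is immediate by conjugating back.
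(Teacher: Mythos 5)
The paper itself offers no proof of this proposition---it is quoted as a known result from Favre and Rivera-Letelier \cite{Charles:2009be}---and your sketch is a correct reconstruction of exactly that source's potential-theoretic argument: the homogeneous Green function $G_F$ and its Monge--Amp\`ere measure, invariance from $G_F\circ F=d\,G_F$, uniqueness via equidistribution of $d^{-n}((f^{\an})^n)^*\delta_x$ for $x$ off the (classical, type 1) exceptional set, the good-reduction case from $(f^{\an})^*\delta_{\zeta_{0,1}}=d\,\delta_{\zeta_{0,1}}$, and the mass/local-degree bookkeeping at a type 2 atom (periodicity of the atom, full local degree $d$ along the cycle, hence $\mu_f^{\NA}=\delta_{\zeta}$ and good reduction after conjugating $\zeta$ to the Gauss point). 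The one step you explicitly leave as a black box---the equidistribution/energy estimate---is precisely the technical core of \cite{Charles:2009be}, so your treatment matches the paper's approach and level of completeness.
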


\subsection{Snc models and reduction map}
\begin{dfn}
    An {\it snc model} $X$ of $\bP^1(\bC)\times \bD^*$ is a complex manifold having a proper submersion $X\to\bD$ such that $X|_{t\neq0}\simeq \bP^1(\bC)\times\bD^*$ and $X|_{t=0}$ defines a simple normal crossing divisor.
\end{dfn}
By abuse of notation, we sometimes regard $X$ as a regular scheme over $\Spec R$ and we identify $\bP^1_K$ and $\bP^1_R$ with $\bP^1(\bC)\times\bD^*$ and $\bP^1(\bC)\times\bD$ respectively. For an snc model $X$ of $\bP^1(\bC)\times\bD^*$, we write
\begin{align*}
    X_0=\sum_{i=1}^{k}n_i E_i.
\end{align*}
We may attach a so-called {\it divisorial point} $\eta_i\in\bP^{1,\an}_{\bC((t))}$ to each $E_i$, defined as 
\begin{align*}
    |f(\eta_i)|=\exp(-\ord_{E_i}(f)/n_i),
\end{align*}
where $\ord_E(f)$ is the order of the vanishing of $f$ at the generic point of $E$. 
 
For an snc model $X$, the {\it reduction map} is defined as follows: for any closed point $x\in X_{\bC((t))}\simeq\bP^1_{\bC((t))}$, its topological closure $\overline{\{x\}}\subset X_{\bC[[t]]}$ with respect to Zariski topology contains unique closed point on the special fiber $X_0$ by the valuative criterion of properness. We denote it by $\red_X(x)$. By 1.3 of \cite{baker-payne-rabinoff}, one can extend this map to the analytification:
\begin{prop}
    The reduction map $\red_X$ extends to an anti-continuous map $\red_X:X_{\bC((t))}^{\an}\to X_{0}$, which maps $\eta_i$ to the generic point of $E_i$ and the other points to the closed points of $X_0$. Moreover, it is constant on each connected component of $X_{\bC((t))}^{\an}\setminus\{\eta_i\}_{i=1}^{k}$.
\end{prop}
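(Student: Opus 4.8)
The plan is to construct $\red_X$ pointwise through the valuative criterion of properness applied to the valuation rings of completed residue fields, and then to read off the topological properties from the general theory of model reduction maps as in \cite{baker-payne-rabinoff}.

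First I would extend the set-theoretic definition to all of $X^{\an}_{\bC((t))}$. A point $x\in X^{\an}_{\bC((t))}$ is a multiplicative seminorm whose completed residue field $\mathcal{H}(x)$ carries a valuation extending the $t$-adic one; write $\mathcal{H}(x)^{\circ}$ for its valuation ring, a (generally non-discrete) valuation ring dominating $\bC[[t]]$. The point $x$ determines a morphism $\Spec\mathcal{H}(x)\to X_{\bC((t))}\hookrightarrow X$, and since $X\to\Spec\bC[[t]]$ is proper, the valuative criterion for the valuation ring $\mathcal{H}(x)^{\circ}$ yields a unique extension $\Spec\mathcal{H}(x)^{\circ}\to X$. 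As $t$ lies in the maximal ideal of $\mathcal{H}(x)^{\circ}$, the image of the closed point of $\Spec\mathcal{H}(x)^{\circ}$ lies over $t=0$, hence in $X_0$; I define $\red_X(x)$ to be this image. On type $1$ points this recovers the scheme-theoretic definition given above, so $\red_X$ genuinely extends it.

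Next I would identify the images. The point $\red_X(x)$ is exactly the center on $X$ of the valuation attached to $x$. For the divisorial point $\eta_i$ the associated valuation is the normalization $\ord_{E_i}/n_i$ of the divisorial valuation of $E_i$, whose center on the regular model $X$ is the generic point of $E_i$; thus $\red_X(\eta_i)$ is the generic point of $E_i$. Conversely, a valuation is centered at the generic point of a component $E_i$ precisely when it is equivalent to this divisorial valuation, so in the curve case the $\eta_i$ are the only points whose center is a generic point; every other point of $X^{\an}_{\bC((t))}$ therefore reduces to a closed point of $X_0$.

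Finally, for the topology I would invoke the anti-continuity of the model reduction map from \cite{baker-payne-rabinoff}: the preimage under $\red_X$ of a Zariski-closed subset of $X_0$ is open, equivalently the preimage of a Zariski-open subset is closed. In particular, for each closed point $\bar x\in X_0$ the tube $\red_X^{-1}(\bar x)$ is open, and these tubes are pairwise disjoint. Since $X^{\an}_{\bC((t))}\setminus\{\eta_i\}_{i=1}^{k}$ maps entirely into closed points by the previous step, these open tubes cover it and exhibit $\red_X$ there as a locally constant map; hence $\red_X$ is constant on each connected component of $X^{\an}_{\bC((t))}\setminus\{\eta_i\}_{i=1}^{k}$. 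I expect the main obstacle to be the anti-continuity together with the precise bookkeeping of which points are centered at generic points: this requires the full formal-model reduction formalism of \cite{baker-payne-rabinoff}, and over the non-discretely valued fields $\mathcal{H}(x)$ one must check carefully that the valuative criterion still applies and that the resulting center lands in the special fiber.
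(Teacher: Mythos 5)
Your proposal is correct, and it follows essentially the same route as the paper, which gives no independent proof but simply cites Section 1.3 of \cite{baker-payne-rabinoff}: the construction via the valuative criterion applied to $\mathcal{H}(x)^{\circ}$, the identification of the $\eta_i$ as the unique points centered at generic points, and the anti-continuity statement are exactly the machinery of that reference. Your final step — open tubes over closed points partition $X^{\an}_{\bC((t))}\setminus\{\eta_i\}_{i=1}^{k}$, hence the map is locally constant and therefore constant on connected components — is a correct and complete way to deduce the last assertion.
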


In the later discussion, we use the symbol $\red_X$ for the reduction map restrected to $\bP^{1,\an}_{\bC((t))}\setminus\{\eta_i\}_{i=1}^{k}$, i.e., $\red_X:\bP^{1,\an}_{\bC((t))}\setminus\{\eta_i\}_{i=1}^{k}\to X_0(\bC)$. 

We remark that for a trivial model $X=\bP^1(\bC)\times\bD$, the corresponding reduction map is written in terms of the tangent space. By definition, the divisorial point corresponding to the unique divisor on the special fiber $E=X_0$ is nothing but $\zeta_{0,1}$. Then, the natural bijection $\bA^{1,an}_{\bC((t))}\setminus\{\zeta_{0,1}\}\to\bP^1(\bC)$ in Section \ref{subsec: sncmodels - background - treestructure} naturally extends to a bijection $\bP^{1,an}_{\bC((t))}\setminus\{\zeta_{0,1}\}\to\bP^1(\bC)$ and this is the reduction map $\red_X$ in this case.

We use this reduction map to define a map from the space of probability measures on $\bP^{1,\an}_{\bC((t))}$ without any mass on $\{\eta_i\}_{i=1}^{k}$ to that of probability measures that is written as a countable sum of atomic measures; if $\rho$ is a probability measure on $\bP^{1,\an}_{\bC((t))}$ with no mass on $\{\eta_i\}_{i=1}^{k}$, i.e. $\rho(\bP^{1,\an}_{\bC((t))}\setminus \{\eta_1,\ldots,\eta_k\})=1$, 
\begin{align*}
    (\red_X)_*\rho=\sum_{x\in X_0(\bC)}\rho(U_x)\delta_x,
\end{align*}
where $U_x=\red_X^{-1}(\{x\})$, is a probability measure on $X_0(\bC)\simeq X|_{t=0}$ which is a countable sum of atomic measures.

\subsection{contraction of curves and relation between minimal models}
An snc model $X$ of $\bP^1(\bC)\times\bD^*$ is said to be {\it minimal} if $X|_{t=0}\simeq \bP^1(\bC)$. 
There is a classical result of Castelnuovo claiming that any (-1)-curve $C\subset X_0$ is contracted by blow-down; that is, there exists another snc model $X'$ and a birational morphism $\pi: X\to X'$ such that $\pi$ is isomorphic outside of $C$ and the image of $C$ is contracted to one point. 

As a consequence of intersection theory and the above Castelnuovo's theorem, we have the following fact:
\begin{prop}
    For any snc model $X$ of $\bP^1(\bC)\times\bD^*$ and any irreducible component $E\subset X|_{t=0}$, we have an isomorphism $E\simeq \bP^1(\bC)$. There exists a composite of contraction of curves $\pi:X\to X_E$ such that $X_E$ is another snc model of $\bP^1(\bC)\times\bD^*$ with $X_E|_{t=0}=E$.
\end{prop}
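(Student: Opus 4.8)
The plan is to treat the two assertions separately, reducing both to the structure theory of birational morphisms between smooth complex surfaces together with the Castelnuovo contraction recalled above. The basic observation is that $X$ and the minimal model $\bP^1(\bC)\times\bD$ are isomorphic over the punctured disk $\bD^*$, hence birational and isomorphic away from their special fibres. Running Castelnuovo contractions of $(-1)$-curves inside $X_0$ produces a birational morphism $\psi\colon X\to X_{\min}$ onto a minimal model, and since over the disk a $\bP^1$-fibration has a unique minimal model, namely the trivial one, we may take $X_{\min}=\bP^1(\bC)\times\bD$. Equivalently, $X$ is obtained from $\bP^1(\bC)\times\bD$ by a finite sequence of blow-ups at points of special fibres, a presentation I would use throughout.

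For the first assertion I would argue along this blow-up presentation. Blowing up a point on a smooth surface introduces an exceptional curve isomorphic to $\bP^1(\bC)$, and the proper transform of any component through that point, being the blow-up of a smooth curve at a smooth point, is isomorphic to the original component; thus rationality of all fibre components is preserved under each blow-up, and it holds trivially for the single fibre $F\cong\bP^1(\bC)$ of the minimal model. Induction on the number of blow-ups then gives $E\cong\bP^1(\bC)$ for every irreducible component $E\subset X_0$. (Alternatively, one may invoke the constancy of the arithmetic genus in the flat family $X\to\bD$, which keeps $p_a$ of the fibres equal to $p_a(\bP^1(\bC))=0$.)

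For the second assertion the idea is to contract $X_0$ down to the single component $E$ by repeated application of Castelnuovo's theorem, inducting on the number $k$ of irreducible components of $X_0$. If $k=1$ then $X_0=E$ and $X_E=X$ already works. If $k\geq 2$, then $X$ is not minimal, so $X_0$ contains a $(-1)$-curve; the point is to select such a curve $C\neq E$, contract it to obtain a new snc model $X'$ whose special fibre has $k-1$ components and still contains the (isomorphic) image of $E$, and then apply the inductive hypothesis to the pair $(X',E)$. The resulting $X_E$ has irreducible special fibre $\cong\bP^1(\bC)$, hence is itself an snc model with $X_E|_{t=0}=E$.

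The main obstacle is precisely the selection step: guaranteeing, at each stage with $k\geq 2$, a $(-1)$-curve \emph{distinct from} $E$, so that $E$ is never the curve we are forced to contract. This is where the fibre structure enters. Since $X_0$ is a tree of rational curves, the relations $X_0\cdot E_i=0$ give $n_iE_i^2=-\sum_{j\neq i}n_j(E_i\cdot E_j)$, so every proper component has negative self-intersection and the self-intersection of a leaf of the tree is governed by the multiplicity of its unique neighbour. I expect the verification to reduce to producing, whenever $E$ is not the whole fibre, a contractible curve in the tree away from $E$; the natural candidate to survive all contractions is a component of multiplicity one, which is consistent with the fact that the equality $X_E|_{t=0}=E$ forces $E$ to appear in $X_0$ with multiplicity one (as $\psi^{*}(X_E)_0=X_0$ and a section of the $\bP^1$-fibration meets $X_0$ once). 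Carrying out this selection, and checking that each contraction keeps us within snc models over $\bD$, is the technical heart of the argument.
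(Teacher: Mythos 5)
The paper offers no proof of this proposition at all: it is asserted as ``a consequence of intersection theory and the above Castelnuovo's theorem.'' So your proposal has to be judged on its own merits. Your argument for the first assertion is fine: contracting $(-1)$-curves reaches a relatively minimal model, whose special fibre is a smooth $\bP^1(\bC)$ by the standard adjunction computation, and factoring $X$ over it into point blow-ups preserves rationality of all fibre components.

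The second assertion is where there is a genuine gap, and it sits exactly at the step you defer as ``the technical heart'': producing, at each stage, a contractible $(-1)$-curve distinct from $E$. This step cannot be carried out in general, because the statement as written is \emph{false} for components of multiplicity $\geq 2$, which the paper's conventions allow (it writes $X_0=\sum_i n_iE_i$). Your own parenthetical observation --- that $X_E|_{t=0}=E$ forces $E$ to have multiplicity one in $X_0$, since $\pi^*(X_E)_0=X_0$ preserves the coefficients of strict transforms --- is not a consistency check but an obstruction. Concretely: blow up the trivial model at a point of the special fibre, then at the resulting node. The special fibre becomes $\tilde F+E_1+2E_2$, where $\tilde F$ and $E_1$ are $(-2)$-curves and $E_2$, of multiplicity $2$, is the unique $(-1)$-curve. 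For $E=E_2$ your induction is stuck at the very first step (the only curve Castelnuovo permits you to contract is $E_2$ itself), and indeed no composite of contractions can end with special fibre supported on the image of $E_2$: a smooth model with irreducible special fibre $n\bar E$ satisfies $\bar E^2=0$ and $K\cdot\bar E=-2/n$, so adjunction forces $n=1$, while the multiplicity $2$ of $E_2$ persists under every contraction morphism.

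Under the extra hypothesis $n_E=1$ your plan does close up, along the lines you sketch: from $X_0\cdot E_i=0$ one gets $\sum_i n_i(2+E_i^2)=2$, and in a reducible fibre every component has $E_i^2<0$, so $(-1)$-curves contribute $n_i>0$ to this sum and all other components contribute $\leq 0$; hence a multiplicity-one component can never be the only $(-1)$-curve, which supplies the required $C\neq E$. Moreover, a $(-1)$-curve in a genus-zero fibre has at most two neighbours (contracting one with three or more would create a fibre with a triple point, impossible since every fibre of a genus-zero fibration on a smooth surface is an iterated blow-up of $\bP^1(\bC)$ and hence snc), so each contraction stays within snc models and the induction terminates at $E$. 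In short: your proof is incomplete as submitted, and it can only be completed after the proposition is restricted to multiplicity-one components (equivalently, after ``snc model'' is required to have reduced special fibre) --- a restriction the paper itself would also need, since it invokes this proposition for every component $E_i$ in the proof of its main theorem.
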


There is a natural minimal model $X=\bP^1(\bC)\times\bD$, the trivial model of $\bP^1(\bC)\times\bD^*$. Actually, the model is unique in the following sense:

\begin{prop}\label{prop: extension of PGL(2,K)}
    For any minimal model $X$, there exists $\tau\in \PGL(2,K)$ such that $\tau$ extends to an isomorphism $\tau:X\to\bP^1(\bC)\times\bD$.
\end{prop}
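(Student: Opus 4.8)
The plan is to show that the structure morphism $\pi\colon X\to\bD$ realizes $X$ as the trivial $\bP^1$-bundle over $\bD$; the required $\tau\in\PGL(2,K)$ will then be simply the automorphism of $\bP^1_K$ that the trivializing isomorphism induces on generic fibers. Indeed, any isomorphism of models over $\bD$ restricts on the generic fiber $X_K\simeq\bP^1_K$ to an element of the automorphism group $\PGL(2,K)$ of $\bP^1_K$, so once we have a model isomorphism $X\simeq\bP^1(\bC)\times\bD$ we may read $\tau$ off from it. Since $X$ is minimal, $X_0\simeq\bP^1(\bC)$ is smooth and reduced, and as $\pi$ is a proper submersion every fiber is isomorphic to $\bP^1(\bC)$. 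Because $K=\Frac\cO_0$ consists of germs at the origin, we are free to shrink $\bD$ around $0$ whenever convenient.

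First I would produce a section of $\pi$. Regarding $X$ as a regular proper scheme over the DVR $\Spec R$ with $R=\cO_0$, any $K$-rational point of $\bP^1_K$ extends by the valuative criterion of properness to a section $\sigma\colon\Spec R\to X$ (equivalently, a holomorphic section over a small disk). Let $C\subset X$ be its image. Then $C$ is a Cartier divisor on the smooth surface $X$ meeting each fiber $X_t$ transversally in a single point, so $\cL:=\cO_X(C)$ restricts to $\cO_{\bP^1}(1)$ on every fiber.

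Next I would push $\cL$ forward along $\pi$. Since $H^1(\bP^1,\cO(1))=0$ and $h^0(\bP^1,\cO(1))=2$ is independent of $t$, Grauert's base-change theorem shows that $E:=\pi_*\cL$ is locally free of rank $2$ on $\bD$ with formation commuting with base change, and the evaluation map $\pi^*E\to\cL$ is surjective because $\cO(1)$ is globally generated on each fiber. As $\bD$ is a contractible Stein manifold, $E$ is holomorphically trivial (by Grauert's Oka principle a holomorphic bundle on $\bD$ is trivial once it is topologically trivial, which holds since $\bD$ is contractible), so $\bP(E)\simeq\bP^1(\bC)\times\bD$. The surjection then defines a morphism $\Phi\colon X\to\bP(E)\simeq\bP^1(\bC)\times\bD$ over $\bD$ which on each fiber is the map associated to the complete linear system $|\cO_{\bP^1}(1)|$, hence a fiberwise isomorphism; a morphism of smooth proper families over $\bD$ that is a fiberwise isomorphism is an isomorphism, so $\Phi$ is the desired model isomorphism and $\tau:=\Phi_K\in\PGL(2,K)$.

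The main technical point is establishing the local freeness of $\pi_*\cL$ together with its compatibility with base change and the triviality of $E$ over the disk; once these are secured, the construction of $\Phi$ and the check that it is a fibrewise isomorphism are routine. Alternatively, one can first invoke the Fischer--Grauert theorem---using the infinitesimal rigidity $H^1(\bP^1,T_{\bP^1})=0$---to see directly that $\pi$ is a holomorphic $\bP^1$-bundle, and then trivialize it over the contractible Stein base $\bD$.
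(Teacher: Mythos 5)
Your proposal is correct and follows essentially the same route as the paper: both take a $K$-rational point, extend it to a section/divisor $C=\overline{\{x\}}$ of the model, push forward $\cO_X(C)$ to obtain a rank-$2$ trivial sheaf on the base, and use its two generating sections to define a fiberwise-degree-one morphism to the trivial $\bP^1$-bundle, which is then an isomorphism. The only difference is packaging: the paper works algebraically over the local ring $R=\cO_0$ (where locally free already implies free, so no Oka principle or Grauert trivialization is needed), while you work analytically over $\bD$ with Grauert base change and Stein theory.
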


\begin{proof}
    We fix a homogeneous coordinate $([x:y],t)\in \bP^1\times\bD$ so that $\bP^1_R=\Proj R[x,y]$. It is enough to find $\tau\in\PGL(2,K)$ such that $\tau:\bP^1_R\simeq X$. We put $\pi: X\to\Spec R$ to be the structural morphism. Take any rational point $x\in X(K)$, which always exists as $X_K\simeq\bP^1_K$, and put $\overline{x}$ as its closure in $X$. We first see that $M:=\Gamma(\Spec R, (\pi)_*\cO_X(\overline{x}))$ is a rank $2$ free $R$-module. Since $X_K\simeq\bP^1_K$ and $X_{\bC}\simeq \bP^1_{\bC}$, $\Gamma(\Spec K, (\pi)_*\cO_X(\overline{x}))$ (resp. $\Gamma(\Spec \bC, (\pi)_*\cO_X(\overline{x}))$) is a rank 2 free $K$-module (resp. $\bC$-module). By flat base change, they coincide $M_K$ and $M_{\bC}$, which shows the claim.

    Take any basis $z,w\in M$ over $R$. Since they generate $M$, there exists an immersion $\tau: X\to \bP^1_R$ such that $z$ maps to $x$ and $w$ maps to $y$. Since it is an isomorphism on the generic fiber, $\tau\in\PGL(2.K)$. Since it is also an isomorphism on the special fiber, $\tau$ is an isomorphism on whole $X$.
\end{proof}

For a blow-up (or blow-down) morphism of snc models, we have the following relation in their reduction maps:

\begin{lmm}\label{lmm: background - contraction - compatibility}
    Let $X$ be an snc model dominating another snc model $Y$ with contraction map $\pi: X\to Y$. Since $\pi$ is an isomorphism on their generic fibers, it induced $\pi^{\an}:X^{\an}_{\bC((t))}\simeq Y^{\an}_{\bC((t))}$ Then, we have
    \begin{align*}
        \pi\circ \red_X=\red_Y\circ \pi^{\an} : \bP^{1,\an}_K\setminus \{\eta_1,\ldots,\eta_k\}\to Y_0,
    \end{align*}
    where $X_0=\bigcup_{1\leq i\leq k} E_i$ and $\eta_i$ is a divisorial point associated to $E_i$.
\end{lmm}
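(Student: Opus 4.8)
The plan is to prove the identity first on the classical (type~1) points, where both reduction maps are given directly by Zariski closures and the verification is a soft consequence of properness, and then to propagate the equality to all of $\bP^{1,\an}_K\setminus\{\eta_1,\ldots,\eta_k\}$ using that both sides are locally constant there and that type~1 points are dense in the Berkovich line.

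First I would check that both composites are defined and constant on each connected component of $\bP^{1,\an}_K\setminus\{\eta_1,\ldots,\eta_k\}$. For $\pi\circ\red_X$ this is immediate from the proposition on $\red_X$ recalled above, which asserts that $\red_X$ is anti-continuous and constant on the connected components of $X^{\an}_{\bC((t))}\setminus\{\eta_i\}_i$. For $\red_Y\circ\pi^{\an}$ I would first observe that the divisorial points attached to the components of $Y_0$ form a subset of $\{\eta_1,\ldots,\eta_k\}$: each component $E'$ of $Y_0$ is the birational image of its strict transform $E\subset X_0$, and since $\pi$ is an isomorphism near the generic point of $E'$, the valuations $\ord_{E'}/n_{E'}$ and $\ord_{E}/n_{E}$ coincide, so the associated divisorial points agree under the identification $\pi^{\an}$. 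Hence the locus removed for $Y$ is contained in $\{\eta_i\}_i$, the partition of $\bP^{1,\an}_K$ coming from the $Y$-divisorial points is coarser than the one coming from the $X$-divisorial points, and therefore $\red_Y\circ\pi^{\an}$ is defined and constant on every connected component of $\bP^{1,\an}_K\setminus\{\eta_i\}_i$. As type~1 points are dense, each such component contains one, so it suffices to prove the equality at type~1 points.

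Next, for a closed (type~1) point $x\in\bP^1_K$, note that $\pi^{\an}(x)=x$ under the identification $X_K\simeq Y_K\simeq\bP^1_K$, so I must show $\pi(\red_X(x))=\red_Y(x)$. By definition $\red_X(x)$ is the unique point of $\overline{\{x\}}\cap X_0$, where the closure is taken in the integral model $X_{\bC[[t]]}$, and similarly $\red_Y(x)$ is the unique point of $\overline{\{x\}}\cap Y_0$. Since $\pi$ is proper, hence closed, and $\overline{\{x\}}\subset X_{\bC[[t]]}$ is irreducible with generic point $x$, its image $\pi(\overline{\{x\}})$ is closed and irreducible with generic point $\pi(x)=x$, whence $\pi(\overline{\{x\}})=\overline{\{x\}}\subset Y_{\bC[[t]]}$. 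As $\pi$ is a morphism over $\Spec R$ we also have $\pi(X_0)\subseteq Y_0$. Combining, $\pi(\red_X(x))\in\pi(\overline{\{x\}}\cap X_0)\subseteq\pi(\overline{\{x\}})\cap\pi(X_0)\subseteq\overline{\{x\}}\cap Y_0=\{\red_Y(x)\}$, which forces $\pi(\red_X(x))=\red_Y(x)$. By the density argument of the previous paragraph, the identity then holds on all of $\bP^{1,\an}_K\setminus\{\eta_i\}_i$.

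The classical-point computation is the easy, purely scheme-theoretic part; the step requiring care is the bookkeeping in the reduction to type~1 points, namely verifying that contracting the $(-1)$-curve deletes only the divisorial point of the contracted component while leaving the remaining divisorial points and their weights $n_i$ unchanged, so that the locally constant structure of $\red_Y$ pulls back compatibly through $\pi^{\an}$. The only other point to confirm is the density of type~1 points in each connected component of $\bP^{1,\an}_K\setminus\{\eta_i\}_i$, which follows from the standard density of classical points in the Berkovich projective line.
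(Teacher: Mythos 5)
Your proof is correct and follows essentially the same route as the paper's: reduce to classical (type~1) points using that both composites are constant on the connected components of $\bP^{1,\an}_K\setminus\{\eta_1,\ldots,\eta_k\}$ and each component contains such a point, then conclude at closed points from properness of $\pi$ and the uniqueness of the special-fiber point of the closure $\overline{\{x\}}$. Your version merely fills in slightly more detail (the coarseness of the $Y$-partition via strict transforms, and the identification $\pi(\overline{\{x\}})=\overline{\{x\}}$ in $Y_{\bC[[t]]}$) than the paper's uniqueness-counting argument, but the underlying idea is identical.
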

\begin{proof}
    Since every component of $X_{\bC((t))}^{\an}\setminus\{\eta_1,\ldots,\eta_k\}$ contains a closed point and the reduction map is constant on the components, it is enough to show the equality only on the set of closed points. Then, this result follows from the uniqueness of the reduction map; that is, for any closed point $x$ of $X_{\bC((t))}$, its reduction under $\red_X$ is a unique point on the special fiber of $\overline{\{x\}}\subset X_{\bC[[t]]}$. Therefore, $\pi\circ \red_X$ is a unique point on the special fiber of $\pi(\overline{\{x\}})\subset Y_{\bC[[t]]}$. On the other hand, $\red_Y(\pi(x))$ is a unique point on the special fiber of $\overline{\pi(x)}\subset Y_{\bC[[t]]}$. Since $\overline{\pi(x)}\subset\pi(\overline{\{x\}})$ and they both contain exactly one point on their special fibers, they must coincide.
\end{proof}

\section{Proof of the main theorem}\label{sec: proof of the main theorem}
We now specify the situation we work on based on what we saw in Section \ref{sec: sncmodels - background}.

\begin{set}\label{setting: merom degeneration}
    We fix an inhomogeneous coordinate $z\in\bP^1(\bC)$ and $t\in\bD$. Let $f_t(z):\bP^1(\bC)\times\bD^*\to\bP^1(\bC)\times\bD^*$ be an analytic family of rational functions of degree $d$ meromorphically degenerating at the origin, i.e.,
    \begin{align*}
        f_t(z)=\frac{a_0(t)z^d+\cdots+a_d(t)}{b_0(t)z^d+\cdots+b_d(t)},
    \end{align*}
    where $a_i,b_j:\bD\to\bC$ are holomorphic functions so that $\deg f_t=d$ for any $t\in\bD^*$. We assume that $f$ has genuinely bad reduction defined in Definition \ref{definiton: background - good reduction}.

    Let $X$ be any snc model of $\bP^1(\bC)\times\bD^*$ such that
    \begin{align*}
        X|_{t=0}=\bigcup_{i=1}^k E_i,
    \end{align*}
    where each $E_i$ is irreducible. By taking the base change $K\to \bC((t))$ and the analytification in the sense of Berkovich, we consider an analytic variety $\bP^{1,\an}_{\bC((t))}$ and its analytic endomorphism $f^{\an}_{\bC((t))}:\bP^{1,\an}_{\bC((t))}\to \bP^{1,\an}_{\bC((t))}$. For each $i=1,\ldots,k$, the divisorial point corresponding to $E_i$ is denoted by $\eta_i\in\bP^{1,\an}_{\bC((t))}$. The reduction map is denoted by $\red_X:X^{\an}_{\bC((t))}\setminus\{\eta_1,\ldots,\eta_k\}\to X|_{t=0}$. When $X$ is a trivial model $\bP^1(\bC)\times\bD$, the reduction map $\red_X$ is simply denoted by $\red$.

    For each $t\in\bD^*$, let $\mu_t$ be the canonical measure attached to $f_t$, which we regard as a probability measure on $\bP^1(\bC)\times\{t\}$. Let $\mu_f^{\NA}$ be the canonical measure attached to $f^{\an}_{\bC((t))}$, which is a probability measure on $\bP^{1,\an}_{\bC((t))}$. 
\end{set}
The main theorem is, with the above notation, as follows:

\begin{thm} \label{thm: proof of the main thm - main thm}
    In Setting and Notation \ref{setting: merom degeneration}, any weak limit of $\{\mu_t\}$ is a countable sum of atomic measures. Moreover, the following weak convergence of measures holds on $X$:
    \begin{align*}
        \lim_{t\to0}\mu_t=(\red_X)_*\mu_f^{\NA}.
    \end{align*}
\end{thm}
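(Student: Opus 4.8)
The plan is to show that every weak-$*$ limit of the family $\{\mu_t\}$ coincides with $\nu:=(\red_X)_*\mu_f^{\NA}$. Since the total space of $X$ over any closed subdisk is compact and each $\mu_t$ is a probability measure carried by the compact fibre $X_t$, the family is weak-$*$ precompact, every limit is a probability measure, and (as $X_t\to X_0$) every limit is carried by $X_0$; thus it suffices to identify the limit. First I record that, because $f$ has genuinely bad reduction, Proposition \ref{prop: background - basic facts from non-archimedean dynamics} guarantees that $\mu_f^{\NA}$ puts no mass on type $2$ points, in particular none on any divisorial point $\eta_i$. Hence $\mu_f^{\NA}(\bP^{1,\an}_{\bC((t))}\setminus\{\eta_1,\dots,\eta_k\})=1$ and $\nu$ is a well-defined atomic probability measure on $X_0$, with $\nu(\{y\})=\mu_f^{\NA}(\red_X^{-1}(y))$ summing to $1$ over the closed points $y\in X_0(\bC)$.

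Next I treat the minimal (one-component) case, the only place the input of DeMarco--Faber and Okuyama enters. Let $M$ be a minimal model; by Proposition \ref{prop: extension of PGL(2,K)} there is $\tau\in\PGL(2,K)$ with $\tau\colon M\xrightarrow{\sim}\bP^1(\bC)\times\bD$. Conjugation turns $\{f_t\}$ into the family $g_t:=\tau_t\circ f_t\circ\tau_t^{-1}$ on the trivial model, whose canonical measures are $\tau_{t,*}\mu_t$ and whose induced non-archimedean map has canonical measure $\tau^{\an}_*\mu_f^{\NA}$; as potentially good reduction is a conjugacy invariant, $g$ still degenerates, so Proposition \ref{prop: intro - demarcofaber} applies and gives $\tau_{t,*}\mu_t\to \red_*(\tau^{\an}_*\mu_f^{\NA})$. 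Transporting back through the isomorphism $\tau$ and using $\tau^{-1}\circ\red\circ\tau^{\an}=\red_M$ (Lemma \ref{lmm: background - contraction - compatibility}), I obtain $\mu_t\to(\red_M)_*\mu_f^{\NA}$ on $M$.

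For a general snc model $X$ I determine the limit mass $\mu_\infty(\{y\})$ at each closed point $y\in X_0(\bC)$ separately, distinguishing whether $y$ lies in the interior of a single component or is a node. The point is that pushing $\mu_\infty$ forward along a contraction $\pi\colon X\to Y$ (which is an isomorphism on generic fibres, so $\pi^{\an}=\mathrm{id}$ and $\pi\circ\red_X=\red_Y$ by Lemma \ref{lmm: background - contraction - compatibility}) recovers the weak limit of $\{\mu_t\}$ on $Y$, but only \emph{away} from the locus $\pi$ contracts; the genuine difficulty is precisely to pin down the mass on the contracted strata, where push-forward is no longer injective. For $y$ in the interior of a component $E_i$ I use the contraction $\pi_i\colon X\to X_{E_i}$ onto the minimal model with special fibre $E_i$ furnished by the contraction result for components; since $\pi_i$ is a local isomorphism near $y$ and $X_{E_i}$ is minimal, the previous step gives $\mu_\infty(\{y\})=\mu_f^{\NA}(\red_{X_{E_i}}^{-1}(\pi_i y))=\mu_f^{\NA}(\red_X^{-1}(y))=\nu(\{y\})$. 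For a node $y=E_i\cap E_j$ I first contract, by iterating the contraction result, every component other than $E_i$ and $E_j$ to obtain a two-component model $W_{ij}$ with $(W_{ij})_0=E_i\cup E_j$ and with $X\to W_{ij}$ an isomorphism near $y$; on the two-component model the interiors of $E_i$ and $E_j$ are already determined by its two minimal contractions, so conservation of total mass $1$ forces the single remaining node mass, giving $\mu_\infty(\{y\})=\mu_f^{\NA}(\red_{W_{ij}}^{-1}(\mathrm{node}))=\mu_f^{\NA}(\red_X^{-1}(y))=\nu(\{y\})$.

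Finally I assemble the pieces. Every closed point of the reduced curve underlying $X_0$ is either an interior point of a component or a node, so the computation above yields $\mu_\infty(\{y\})=\nu(\{y\})$ for all $y$. Summing, $\sum_y\mu_\infty(\{y\})=\sum_y\nu(\{y\})=1=\mu_\infty(X_0)$, so $\mu_\infty$ has no continuous part and $\mu_\infty=\sum_y\mu_\infty(\{y\})\delta_y=\nu$. This shows simultaneously that every weak limit is a countable sum of atoms and that it equals $(\red_X)_*\mu_f^{\NA}$, completing the proof. The main obstacle, as flagged above, is the contracted strata, where the trivial-model statement of DeMarco--Faber and Okuyama loses information; I resolve it by testing against the auxiliary minimal and two-component models and closing the remaining ambiguity with conservation of mass.
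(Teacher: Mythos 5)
Your overall strategy---reduce to minimal models via Proposition \ref{prop: extension of PGL(2,K)} and the DeMarco--Faber/Okuyama input (Proposition \ref{prop: intro - demarcofaber}), then recover the limit on a general snc model through contractions and Lemma \ref{lmm: background - contraction - compatibility}---is the same as the paper's, and your minimal case and your identification of the masses at points of the smooth loci $E_i^{\sm}$ are correct (this parallels Lemma \ref{lemma: main thm - main thm for minimal models} and the first half of the paper's proof, using that $\pi_i^{-1}(\pi_i(y))=\{y\}$ for such $y$). The genuine gap is at the nodes: you assert that ``by iterating the contraction result'' every adjacent pair $E_i,E_j$ is the full special fibre of a two-component snc model $W_{ij}$. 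The contraction proposition only lets you contract everything except a \emph{single} prescribed component (ending at a minimal model); it does not let you stop at a prescribed pair, and in general no such snc model exists. Concretely, blow up a point $p$ of the special fibre of the trivial model and then blow up the resulting node: the special fibre becomes $E_1+E_2+2E_3$, where $E_3$ is the last exceptional curve, of multiplicity $2$, meeting each of the disjoint $(-2)$-curves $E_1,E_2$. For the node $y=E_1\cap E_3$, your $W_{13}$ would be the contraction of the $(-2)$-curve $E_2$ alone, which produces an $A_1$ singularity and so leaves the snc category; worse, no snc model at all has special fibre $E_1\cup E_3$, because a two-component snc model is a single point blow-up of a minimal model and hence has both components reduced, whereas the multiplicity $2$ of $E_3$ is intrinsic to its divisorial point (one has $|(z-p)(\eta_3)|=e^{-1/2}\notin\exp(\bZ)$, while divisorial points of reduced components take values in $\exp(\bZ)$ on $\bC((t))(z)^{\times}$). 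So the step that was supposed to pin down the node masses rests on objects that need not exist.

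The repair is what the paper's proof actually does: form no intermediate models, and instead use the relations $(\pi_i)_*\rho=\mu_{E_i}$, valid for every weak limit $\rho$ and every $i$, directly on $X$. For a node $y\in E_i\cap E_j$, the tree structure of $X_0$ shows that $\pi_i^{-1}(\pi_i(y))$ is the entire branch $C_j$ of $X_0$ on the $E_j$-side of $y$, and symmetrically for $\pi_j$; since $C_i\cup C_j=X_0$ and $C_i\cap C_j=\{y\}$, the two relations together with $\rho(X_0)=1$ force
\begin{align*}
    \rho(\{y\})=\mu_{E_i}(\{\pi_i(y)\})+\mu_{E_j}(\{\pi_j(y)\})-1,
\end{align*}
which is exactly the explicit formula the paper records after its proof. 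Applying the same computation to $\nu=(\red_X)_*\mu_f^{\NA}$---legitimate because genuinely bad reduction means $\mu_f^{\NA}$ has no mass on the divisorial points, and $\pi_i\circ\red_X=\red_i\circ\pi_{i}^{\an}$ by Lemma \ref{lmm: background - contraction - compatibility}---shows that $\nu$ satisfies the identical system of relations, whence $\rho=\nu$. This is precisely your conservation-of-mass idea, but it must be run on $X$ itself via branch masses rather than on an auxiliary two-component model; with that substitution your argument goes through and coincides with the paper's.
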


For $i=1,\ldots,k$, let $\pi_i: X\to X^{(i)}$ be the contraction of curves on $X|_{t=0}$ so that $X^{(i)}$ is a minimal model of $\bP(\bC)\times \bD^*$ such that $X^{(i)}|_{t=0}=E_i$. We put $\red_i:=\red_{X_i}:\bP^{1,\an}_{\bC((t))}\setminus\{\eta_i\}\to\bP^1(\bC)$, which coincide with $\pi_i\circ \red_X$ on $\bP^{1,\an}_{\bC((t))}\setminus\{\eta_1,\ldots,\eta_k\}$ by Lemma \ref{lmm: background - contraction - compatibility}. 

When $X$ is minimal, Theorem \ref{thm: proof of the main thm - main thm} is a slight transformation of the result by DeMarco-Faber and Okuyama \cite{DE-MARCO:2014tn}, \cite{article}, and \cite{okuyama2024intrinsicreductionsintrinsicdepths}:
\begin{lmm}\label{lemma: main thm - main thm for minimal models}
    If $X$ is minimal, Theorem \ref{thm: proof of the main thm - main thm} is true, i.e., 
    \begin{align*}
        \lim_{t\to0}\mu_t=(\red_X)_*\mu_f^{\NA}.
    \end{align*}
\end{lmm}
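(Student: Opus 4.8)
The plan is to reduce the minimal case to DeMarco--Faber's theorem (Proposition \ref{prop: intro - demarcofaber}) on the trivial model, via the isomorphism furnished by Proposition \ref{prop: extension of PGL(2,K)}. First I would fix $\tau\in\PGL(2,K)$ extending to an isomorphism $\tau:X\xrightarrow{\sim}Y$, where $Y=\bP^1(\bC)\times\bD$ denotes the trivial model. Since $\tau$ is an isomorphism of complex manifolds, weak convergence of measures is preserved under the induced pushforward: a weak limit $\lim_{t\to0}\mu_t=\nu$ holds on $X$ if and only if $\lim_{t\to0}\tau_*\mu_t=\tau_*\nu$ holds on $Y$. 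The crucial point is to identify $\tau_*\mu_t$ correctly. Over the punctured disk $X|_{t\neq0}=Y|_{t\neq0}=\bP^1(\bC)\times\bD^*$ are canonically identified, and it is through this identification that $\mu_t$ is regarded as a measure on $X_t$; on the other hand $\tau$ restricts there to a nontrivial family of fractional linear transformations $\tau_t\in\PGL(2,\bC)$. Hence $\tau_*\mu_t=(\tau_t)_*\mu_t$, which by naturality of the canonical measure under conjugacy by $\PGL(2,\bC)$ equals the canonical measure $\mu_{g_t}$ of the conjugate family $g_t:=\tau_t\circ f_t\circ\tau_t^{-1}$.

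Next I would apply Proposition \ref{prop: intro - demarcofaber} to the family $g=\{g_t\}$ on the trivial model $Y$, which requires checking that $g$ is a meromorphic family degenerating at $t=0$. Meromorphy is clear since $\tau\in\PGL(2,K)$ has meromorphic coefficients, so $g_t=\tau_t f_t\tau_t^{-1}$ again has meromorphic coefficients. Degeneration follows from the conjugacy-invariance of potentially good reduction: as $f$ has genuinely bad reduction and $g$ is $\PGL(2,K)$-conjugate to $f$, the family $g$ again has genuinely bad reduction, hence does not have good reduction and $\deg g_0<d$. DeMarco--Faber then yields $\lim_{t\to0}(\tau_t)_*\mu_t=(\red_Y)_*\mu_g^{\NA}$ on $Y$, where $\red_Y=\red$ is the reduction of the trivial model; in particular this limit exists and is a countable sum of atomic measures, and transporting back through the homeomorphism $\tau$ guarantees that $\nu=\lim_{t\to0}\mu_t$ exists on $X$.

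Finally I would translate this conclusion back to $X$. The canonical measure is equivariant under conjugacy also non-archimedeanly, so $\mu_g^{\NA}=(\tau^{\an})_*\mu_f^{\NA}$: indeed $(\tau^{\an})_*\mu_f^{\NA}$ satisfies the three characterizing properties of Proposition \ref{prop: background - basic facts from non-archimedean dynamics} relative to $g^{\an}=\tau^{\an}\circ f^{\an}\circ(\tau^{\an})^{-1}$, so the claim follows from uniqueness there. On the other hand, the compatibility of reduction maps (Lemma \ref{lmm: background - contraction - compatibility}, applied to the isomorphism $\tau$) gives $\red_Y\circ\tau^{\an}=\tau_0\circ\red_X$, where $\tau_0:X_0\to Y_0$ is the restriction of $\tau$ to the special fibers. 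Combining these,
\begin{align*}
    \tau_*\nu=(\red_Y)_*\mu_g^{\NA}=(\red_Y)_*(\tau^{\an})_*\mu_f^{\NA}=(\tau_0)_*(\red_X)_*\mu_f^{\NA}.
\end{align*}
Since $\nu$ is supported on $X_0$ the left-hand side equals $(\tau_0)_*\nu$, and as $\tau_0$ is a bijection we may cancel it to obtain $\nu=(\red_X)_*\mu_f^{\NA}$, as desired.

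I expect the main obstacle to be the bookkeeping around the two distinct identifications over the punctured disk --- the canonical one used to place $\mu_t$ on $X_t$ versus the conjugating family $\tau_t$ coming from $\tau$ --- since it is precisely this discrepancy that forces $\tau_*\mu_t$ to be the canonical measure of the \emph{conjugate} family $g$ rather than of $f$ itself. Once this is pinned down, the remaining points are essentially formal: verifying that DeMarco--Faber still applies to $g$ through the conjugacy-invariance of genuine bad reduction, and matching the two reduction maps via Lemma \ref{lmm: background - contraction - compatibility}.
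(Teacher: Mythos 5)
Your proposal is correct and follows essentially the same route as the paper: transport everything to the trivial model via the $\PGL(2,K)$ isomorphism of Proposition \ref{prop: extension of PGL(2,K)}, apply Proposition \ref{prop: intro - demarcofaber} to the conjugated family, and pull the conclusion back using equivariance of the complex and non-archimedean canonical measures together with compatibility of the two reduction maps. The only differences are cosmetic ones in your favor: you verify explicitly that the conjugated family still satisfies the hypotheses of DeMarco--Faber (meromorphy and degeneration, via conjugacy-invariance of genuinely bad reduction) and you derive the identity $\red_Y\circ\tau^{\an}=\tau_0\circ\red_X$ from Lemma \ref{lmm: background - contraction - compatibility}, both of which the paper asserts without comment.
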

\begin{proof}
    By Proposition \ref{prop: extension of PGL(2,K)}, there exists $\tau\in\PGL(2,K)$ such that it extends $\tau:\bP^1(\bC)\times\bD\to X$ as an isomorphism. Set $f_{\tau^{-1}}:=\tau^{-1}\circ f\circ\tau:\bP^1(\bC)\times\bD\dashedrightarrow \bP^1(\bC)\times\bD$. Note that the canonical measure attached to $f_{\tau^{-1}}$ is written as $(\tau^{-1})_*\mu_t$, and that attached to $(f_{\tau^{-1}})_{\bC((t))}^{\an}$ is written as $(\tau^{-1,\an}_{\bC((t))})_*\mu_f^{\NA}$. By Proposition \ref{prop: intro - demarcofaber}, we obtain the following weak convergence on $\bP^1(\bC)\times\bD$:
    \begin{align*}
        \lim_{t\to0}(\tau^{-1})_*\mu_t=(\red)_*(\tau^{-1,\an}_{\bC((t))})_*\mu_f^{\NA}.
    \end{align*}
    Since $\tau$ gives an isomorphism between $\bP^1(\bC)\times\bD$ and $X$ and the divisorial point corresponding to the trivial model is the Gauss point $\zeta_{0,1}$, we obtain $\tau_{\bC((t))}^{-1,\an}(\zeta_{0,1})=\eta_i$, which implies $\tau\circ\red\circ\tau_{\bC((t))}^{-1,\an}=\red_X$ on $X$. Since $\tau$ is an isomorphism, we can push forward everything to obtain
    \begin{align*}
        \lim_{t\to0}\mu_t &=\lim_{t\to0}(\tau)_*(\tau^{-1})_*\mu_t\\
        &=(\tau)_*(\red)_*(\tau^{-1,\an}_{\bC((t))})_*\mu_f^{\NA} \\
        &=(\tau\circ\red\circ\tau_{\bC((t))}^{-1,\an})_*\mu_f^{\NA} \\
        &=(\red_X)_*\mu_f^{\NA},
    \end{align*}
    which completes the proof.
\end{proof}

Now we complete the proof of Theorem \ref{thm: proof of the main thm - main thm}.
\begin{proof}[Proof of Theorem \ref{thm: proof of the main thm - main thm}]
    By Lemma \ref{lemma: main thm - main thm for minimal models}, the statement is true for each $X^{(i)}$. We put $\mu_{E_i}:=(\red_i)_*\mu_f^{\NA}$ be the limit of $\{\mu_t\}$ on $X^{(i)}$. That is,
    \begin{align}\label{eq: main thm - main thm - limit for push forward to minimal models}
        \lim_{t\to0}(\pi_i)_*\mu_t=\mu_{E_i}.
    \end{align}
    We first show that any limit $\rho$ of $\{\mu_t\}_{t\in\bD^*}$ on $X$ is a countable sum of atoms. Suppose the contrary and we assume $\rho|_{E_i}$ is not a countable sum of atoms, nor is $\rho|_{E^{\sm}_i}$ since the singular locus of $E_i$ is a finite set, which is contradiction since $(\pi_1)_*\mu_{E_i}|_{E_i^{\sm}}=\mu_{E_i}|_{E_i^{\sm}}$. 

    Next, we show the weak convergence. Put $\rho$ to be any weak limit of $\{\mu_t\}$ on $X$. Since the relations $(\pi_i)_*\rho=\mu_{E_i}$ uniquely determines $\rho$, it is enough to check $(\red_X)_*\mu_f^{\NA}$ satisfies them. Indeed, by Lemma \ref{lmm: background - contraction - compatibility}, $\pi_i\circ\red_X=\red_i\circ \pi_{i,\bC((t))}^{\an}$. The genuenly bad reduction implies $\mu_f^{\NA}$ has no mass on $\{\eta_1,\ldots,\eta_k\}$, i.e.,  $(\pi_i)_*(\red_X)\mu_f^{\NA}=\mu_{E_i}$. 
\end{proof}
The measure $(\red_X)_*\mu_f^{\NA}$ is written explitly in terms of $\{\mu_{E_i}\}$:
\begin{align*}
    \rho(\{z\})=\begin{cases}
            &\mu_{E_i}(\{z\})\text{ if }z\in E_i^{\sm}; \\
            &\mu_{E_i}(\{z\})+\mu_{E_j}(\{z\})-1\text{ if }z\in E_i\cap E_j. 
    \end{cases}
\end{align*}

\section{Example}\label{sec: example}
We consider a family of quadratic polynomials
\begin{align*}
    f_t(z)=\frac{1}{t}(z^2+1),
\end{align*}
which degenerates to $f_t(z)=\infty$ as $t\to0$. Note that this polynomial is given by conjugating the polynomial $z^2+1/t^2$ so that the two fixed points on $\bC$ stay bounded for any $t$; in particular, $\mu_t$ is the Cantor measure for any $t\in\bD^*$. Let $x_+$ and $x_-$ be the fixed points of $f_t$, i.e.,
\begin{align*}
    x_{\pm}(t)=\frac{t\pm\sqrt{t^2-4}}{2}.
\end{align*}
To see the degenerating limit of various snc models, we first recall the non-archimedean dynamics of $f$.
\subsection{Non-archimedean dynamics of quadratic polynomials}
For simplicity, we take the completion of the algebraic closure $L$ of $\bC((t))$, and we work on dynamics over $L$ instead of $\bC((t))$. The dynamics of quadratic polynomials over $L$ are studied in \cite{Benedetto2007}. Note that they work on a slightly different parametrization, so we present their result in our parametrization. The points $x_{\pm}$ may be regarded as elements in $\bA^{1,\an}_L$, which are fixed by $f_L^{\an}$ and $\red(x_{\pm})=x_{\pm}(0)=\pm\sqrt{-1}$.

Especially in this case, the Julia set of $f_L^{\an}$ is a 2-Cantor set, on which the action is conjugate to the shift map, just as in the complex case. $x_{\pm}$ define points on $\bA^{1,\an}_L$, which are fixed by $f_L^{\an}$. We set
\begin{align*}
    &B:=\left\{z\in \bA^{1,\an}_L\middle| |z|\leq1 \right\}; \\
    &U_{\pm}:=\left\{z\in \bA^{1,\an}_L\middle| |z-x_{\pm}|<1 \right\}.
\end{align*}
Note that in this case $U_{\pm}\subset B$ and $U_{\pm}=\red^{-1}\{x_{\pm}\}=\red^{-1}\{\pm \sqrt{-1}\}$. 
Both of $B_{\pm}$ split into two strictly smaller balls if we take the inverse image of $f_L^{\an}$, and we obtain
\begin{align*}
    \cJ(f_L^{\an})=\bigcap_{n=0}^{\infty}(f_L^{\an})^{-n}B,
\end{align*}
where $\cJ(f_L^{\an})$ is the Julia set of $f_L^{\an}$. We put
\begin{align*}
    B_{\pm}:=\left\{z\in\bA^{1,\an}_L\middle| |z-x_{\pm}|\leq|t|\right\}.
\end{align*}
Then $f_L^{\an}$ maps $B_{\pm}$ onto $B$. By definition, $B_{\pm}\subset U_{\pm}$. Therefore, the above equation of the Julia set is re-written as
\begin{align*}
    \cJ(f_L^{\an})=\left(\bigcap_{n=0}^{\infty}(f_L^{\an})^{-n}U_{+}\right)\cup\left(\bigcap_{n=0}^{\infty}(f_L^{\an})^{-n}U_{-}\right).
\end{align*}
The conjugate is given as follows: for any $z\in \cJ(f_L^{\an})$, $h(z)=(\alpha_n)_{n,\geq0}$, where
\begin{align*}
    \alpha_n=\begin{cases}
        &+\quad\text{if}\quad f_L^{n,\an}(z)\in U_+;\\
        &-\quad\text{if}\quad f_L^{n,\an}(z)\in U_-.
    \end{cases}
\end{align*}
By the above arguments, $(f^{\an}_L)^{-n}B$ contains exactly $2^n$ balls and each has the same mass with respect to $\mu_f^{\NA}$.

\subsection{Weak limits of the canonical measures}
If we take the trivial model $\bP^1(\bC)\times\bD$, the weak limit of $\mu_t$ is a sum of atoms $(\delta_{\sqrt{-1}}+\delta_{-\sqrt{-1}})/2$. Actually, this is the only minimal model such that the weak limit of $\mu_t$ does not contract to an atom; indeed, for any $\tau\in\PGL(2, K)\setminus \PGL(2, R)$, $f_{\tau}$ has fixed point $\tau(x_{\pm})$ with $|\tau(x_{\pm})|>1$ or $|\tau(x_{+})-\tau(x_{-})|<1$. In the former case the Julia set of $f_L^{\an}$ is inside of $U_{\infty}=\{|z|>1\}$, which implies the limiting measure is $\delta_{\infty}$. In the latter case it is inside of $U_{x_{+}(0)}:=\{|z-x_+|<1\}$, which coincides with $U_{x_-(0)}$, and the limiting measure is $\delta_{x_{+}(0)}$.

In general, we have the following claim for the limiting measure to be contracted to an atom over minimal models:
\begin{prop}\label{prop: condition on non-triviality of limiting measures}
    In the Notation same as \ref{setting: merom degeneration}, For any $z\in\bP^{1,\an}_L$, we put $X^{(z)}$ be the minimal model of $\bP^1(\bC)\times\bD^*$ such that the corresponding divisorial point is $z$ and $\rho_z=(\red_{X^{(z)}})_*\mu_f^{\NA}$ be the weak limit of $\{\mu_t\}_{t\in\bD^*}$. We define the set $S$ as follows:
    \begin{align*}
        S:=\left\{z\in\bP^{1,\an}_L\ \middle|\ z\text{ is of type }2 \text{ and }\Supp\rho_z\text{ has at least two points}\right\}.
    \end{align*}
    Then, we obtain the following relation:
    \begin{align*}
        \overline{S}=\overline{\Span \cJ(f_L^{\an})}.
    \end{align*}
\end{prop}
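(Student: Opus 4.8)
The plan is to identify $S$ explicitly as the set of type $2$ points lying on the convex hull $\Span\cJ(f_L^{\an})$ (the smallest connected subset of the tree $\bP^{1,\an}_L$ containing $\cJ(f_L^{\an})$), and then to recover the equality of closures from the density of type $2$ points in the tree. Throughout I would use two inputs already available: the support of $\mu_f^{\NA}$ equals $\cJ(f_L^{\an})$, and, since $f$ has genuinely bad reduction, $\mu_f^{\NA}$ carries no mass on any type $2$ point (both from Proposition \ref{prop: background - basic facts from non-archimedean dynamics}), so that each $\rho_z$ is an honest probability measure. The crucial translation is that for a type $2$ point $z$ the fibers of $\red_{X^{(z)}}$ are exactly the connected components of $\bP^{1,\an}_L\setminus\{z\}$, i.e.\ the tangent directions in $\T_z(\bP^{1,\an}_L)$; writing $U_v$ for the component in direction $v$, one has
\[
\rho_z=\sum_{v\in\T_z(\bP^{1,\an}_L)}\mu_f^{\NA}(U_v)\,\delta_{\red_{X^{(z)}}(U_v)},
\]
so that $z\in S$ if and only if at least two directions $v$ satisfy $\mu_f^{\NA}(U_v)>0$.

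Next I would prove $S\subseteq\Span\cJ(f_L^{\an})$. If $z\in S$, pick two distinct directions $v_1,v_2$ with $\mu_f^{\NA}(U_{v_i})>0$. Since $\Supp\mu_f^{\NA}=\cJ(f_L^{\an})$, each open set $U_{v_i}$ meets $\cJ(f_L^{\an})$, so we may choose $J_i\in U_{v_i}\cap\cJ(f_L^{\an})$. As $J_1,J_2$ lie in different components of $\bP^{1,\an}_L\setminus\{z\}$, the unique geodesic $[J_1,J_2]$ passes through $z$, whence $z\in\Span\{J_1,J_2\}\subseteq\Span\cJ(f_L^{\an})$. Conversely, I would show every type $2$ point $z\in\Span\cJ(f_L^{\an})$ lies in $S$: being type $2$, $z$ is not a leaf of the hull (the leaves are the type $1$ or type $4$ points of $\cJ(f_L^{\an})$), so $z$ lies in the interior of some geodesic $[J_1,J_2]$ with $J_1,J_2\in\cJ(f_L^{\an})$, and then $J_1,J_2$ sit in two distinct components $U_{v_1},U_{v_2}$ of the complement of $z$. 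Each $U_{v_i}$ is open and meets the support $\cJ(f_L^{\an})$, hence has positive mass, giving two massive directions and $z\in S$. Combining the two inclusions, $S$ is \emph{exactly} the set of type $2$ points of $\Span\cJ(f_L^{\an})$.

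Finally I would pass to closures. Type $2$ points are dense in every segment of the tree, and each leaf of the hull is approached by type $2$ points along the segment entering it, so the type $2$ points of $\Span\cJ(f_L^{\an})$ are dense in $\Span\cJ(f_L^{\an})$; hence $\Span\cJ(f_L^{\an})\subseteq\overline{S}$ and therefore $\overline{\Span\cJ(f_L^{\an})}\subseteq\overline{S}$. The reverse inclusion $\overline{S}\subseteq\overline{\Span\cJ(f_L^{\an})}$ is immediate from $S\subseteq\Span\cJ(f_L^{\an})$, giving the asserted equality. The closures are genuinely needed here, since $S$ omits the type $3$ points of the hull and the convex hull of the Cantor set $\cJ(f_L^{\an})$ need not itself be closed.

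I expect the \textbf{main obstacle} to be foundational rather than computational: matching an abstract type $2$ point $z\in\bP^{1,\an}_L$ with a minimal model $X^{(z)}$ whose reduction map collapses exactly the tangent directions at $z$. For points outside the $\PGL(2,K)$-orbit of $\zeta_{0,1}$ this requires passing to a finite extension of the base field and transporting the tangent-space description of the reduction map from the Gauss point by the $\PGL(2,L)$-action, in the spirit of the proof of Lemma \ref{lemma: main thm - main thm for minimal models}. Once this identification of $\red_{X^{(z)}}$ with the collapse of $\T_z(\bP^{1,\an}_L)$ is in place, the measure-theoretic fact that an open set meeting the support has positive mass does all the real work, and the remaining tree topology (connectedness of the complement of a point, density of type $2$ points) is routine.
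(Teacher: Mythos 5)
Your overall strategy is the same as the paper's---the inclusion $\overline{S}\subset\overline{\Span\cJ(f_L^{\an})}$ via two tangent directions of positive mass, and the reverse inclusion via density of type $2$ points---but your converse step rests on a claim that is false in the paper's setting: that a type $2$ point of $\Span\cJ(f_L^{\an})$ is never a leaf of the hull, because ``the leaves are the type $1$ or type $4$ points of $\cJ(f_L^{\an})$.'' Consider a degenerating Latt\`es family, e.g.\ the multiplication-by-$2$ maps $f_\lambda(x)=\frac{(x^2-\lambda)^2}{4x(x-1)(x-\lambda)}$ attached to the Legendre family $y^2=x(x-1)(x-\lambda)$: this is an analytic family of degree $4$ degenerating meromorphically at $\lambda=0$ (the limit has degree $2$), the curve over $\bC((\lambda))$ is a Tate curve, so $f$ has genuinely bad reduction, and $\cJ(f_L^{\an})$ is a \emph{closed segment} (the image of the skeleton of the Tate curve) whose two endpoints are type $2$ points. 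At such an endpoint $z$, only one tangent direction meets $\cJ(f_L^{\an})=\Supp\mu_f^{\NA}$, and $\mu_f^{\NA}(\{z\})=0$ since genuinely bad reduction forbids atoms at type $2$ points; hence $\rho_z$ is a single atom and $z\notin S$. So $S$ is \emph{not} the set of all type $2$ points of the hull, and your derivation of $\Span\cJ(f_L^{\an})\subseteq\overline{S}$, which quotes exactly that identity, has a gap.

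The gap is local and the repair is precisely what the paper does. Your positive-mass argument is valid for every type $2$ point lying in the \emph{interior} of a geodesic $[J_1,J_2]$ with $J_1,J_2\in\cJ(f_L^{\an})$, and such points are still dense in the hull: given $z\in\Span\cJ(f_L^{\an})$, choose $z'\in\cJ(f_L^{\an})$ with $z'\neq z$ and approximate $z$ by type $2$ points $z_n$ interior to $[z,z']$; one direction at $z_n$ contains $z'$, the other contains $z$, and any component of $\bP^{1,\an}_L\setminus\{z_n\}$ meeting the hull must actually contain a Julia point (if it contained a point $w$ of some geodesic between Julia points but neither endpoint, then $z_n$ would separate $w$ from both endpoints, impossible in a tree since the two arcs from $w$ to the endpoints meet only at $w$), hence carries positive mass. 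This is, in substance, the paper's own argument for the second inclusion. Apart from this, your proof matches the paper's: the first inclusion is identical, and your closing point---that identifying $\red_{X^{(z)}}$ with the collapse of $\T_z(\bP^{1,\an}_L)$ requires a $\PGL(2,L)$ change of coordinates and a finite base extension for points not defined over $\bC((t))$---flags a real subtlety that the paper leaves implicit.
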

Here, for a subset $E\subset \bP^{1,\an}_L$, we denote by $\Span E$ the smallest connected set containing $E$ and by $\overline{E}$ the topological closure of $E$. 
\begin{proof}
    Note that $E$ cannot be singleton on $\eta$ by Proposition \ref{prop: background - basic facts from non-archimedean dynamics} as we assume $f_t$ has a genuinely bad reduction. 

    We first show $\overline{S}\subset \overline{\Span\cJ(f^{\an}_L)}$. Take any $z\in S$. By conjugating $f_L$ by some $\tau\in\PGL(2,L)$ such that $f(z)=\zeta_{0,1}$, we may assume $z=\zeta_{0,1}$. We take two distinct complex numbers $c_1, c_2\in\bC$ such that $\rho_z(\{c_1\}),\rho_z(\{c_2\})\neq0$. Then, by denoting $U_{c_i}:=\red^{-1}\{c_i\}$, we see that $\Supp\mu_f^{\NA}\cap U_{c_i}\neq\emptyset$. Then, since $\bP^{1,\an}_L$ is uniquely path connected, any connected set which contains subsets of $U_{c_1}$ and $U_{c_2}$ must path through $z$, i.e., $z\in\Span(\cJ(f^{\an}_L)$. Then, we obtain $S\subset \Span\cJ(f^{\an}_L)$, and therefore $\overline{S}\subset\overline{\Span\cJ(f^{\an}_L)}$.

    It remains to show that $\Span\cJ(f^{\an}_L)\subset \overline{S}$. Take any point $z\in \Span\cJ(f^{\an}_L)$. Since we assume $f$ has a genuinely bad reduction, there exists a point $z'\in \cJ(f^{\an}_L)$ such that $z\neq z'$ and $[z,z']\subset \Span\cJ(f^{\an}_L)$, where $[z,z']$ denotes the unique path connecting $z$ and $z'$. We take a sequence $z_n\to z$ inside $[z,z']$ such that $z_n$ is of type 2 for any $n$. Then, for each $n$, there exists two distincs points $c_1,c_2\in X^{(z_n)}_0$ such that $\red_{X^{(z_n)}}^{-1}(c_i)\cap \Span\cJ(f^{\an}_L)\neq\emptyset$, i.e., $z_n\in S$. Since $z_n$ converges to $z$, we obtain $z\in \overline{S}$.
\end{proof}

Now, we go back to the example $f_t(z)=(z^2+1)/t$ and consider limiting measures over various non-minimal snc models. By the above Proposition, non-trivial examples should occur inside $\overline{\Span\cJ(f^{\an}_L)}$. For any $\alpha=(\alpha_k)_{k=1}^n\in\{\pm\}^n$, we put $\eta_\alpha$ to be the type 2 point corresponding to closed disk $B_{\alpha}$ such that $h^k(B_\alpha)\in B_{\alpha_1}$. Then, $\overline{\Span\cJ(f^{\an_L})}=\Span\cJ(f^{\an_L})$ in this case is a binary tree of infinite length with the branch points $\{\eta_{\alpha}\ |\ \alpha=(\alpha_k)_{k=1}^n\in\{\pm\}^n, n\in\bN\}$ and the endpoints $\cJ(f^{\an}_L)$. 

We already show that the limiting measure is $\rho_0:=(\delta_{\sqrt{-1}}+\delta_{-\sqrt{-1}})/2$. We set $\{X^{(n)}\}$ to be a sequence of snc models so that 
\begin{align*}
    X^{(n)}_0=E+\sum_{k=1}^n\sum_{\alpha\in\{\pm\}^k}E_{\alpha},
\end{align*}
where $E$ is the special fiber of the trivial model and $E_{\alpha}$ is an irreducible component with corresponding divisorial point $\eta_{\alpha}$. Such models are given by compositions of blow-ups starting from the trivial model, and we obtain a birational map $\pi_n: X^{(n+1)}\to X^{(n)}$ which contracts $E_{\alpha}$ with $\alpha\in\{\pm\}^{n+1}$ for each $n$. Denote $\rho_n$ by the limiting measure of $\{\mu_t\}$ on $X^{(n)}$. Then, $\rho_n$ is supported on $\bigcup_{\alpha\in\{\pm\}^n} E_{\alpha}$ and a sum of $2^n$ distinct atoms of the same mass $1/2^n$ on the set of points on the image by $\pi_n$ of its strict transform.

\bibliographystyle{plain}
\bibliography{merom-degeneration-of-p1}

\begin{thebibliography}{10}

\bibitem{Artin:1971qr}
M.~Artin and G.~Winters.
\newblock Degenerate fibres and stable reduction of curves.
\newblock {\em Topology}, 10(4):373--383, 1971.

\bibitem{baker-payne-rabinoff}
Matthew Baker, Sam Payne, and Joseph Rabinoff.
\newblock {\em On the structure of nonarchimedean analytic curves}, volume 605 of {\em Contemporaty Mathematics}.
\newblock American Mathematical Society, 2013.

\bibitem{Benedetto2007}
Robert Benedetto, Jean-Yves Briend, and Herv{\'e} Perdry.
\newblock Dynamique des polyn{\^o}mes quadratiques sur les corps locaux.
\newblock {\em Journal de Th{\'e}orie des Nombres de Bordeaux}, 19(2):325--336, 2007.

\bibitem{berk1990}
Vladimir Berkovich.
\newblock {\em Spectral Theory and Analytic Geometry over Non-Archimedean Fields}.
\newblock Mathematical Surveys and Monographs. American Mathematical Society, 1990.

\bibitem{DeMarco:2005pd}
Laura DeMarco.
\newblock Iteration at the boundary of the space of rational maps.
\newblock {\em Duke Mathematical Journal}, 130(1):169--197, 10 2005.

\bibitem{DE-MARCO:2014tn}
Laura DeMarco and Xander Faber.
\newblock Degenerations of complex dynamical systems.
\newblock {\em Forum of Mathematics, Sigma}, 2:e6, 2014.

\bibitem{Favre:2020rz}
Charles Favre.
\newblock Degeneration of endomorphisms of the complex projective space in the hybrid space.
\newblock {\em Journal of the Institute of Mathematics of Jussieu}, 19(4):1141--1183, 2020.

\bibitem{Charles:2009be}
Charles Favre and Juan Rivera-Letelier.
\newblock Th{\'e}orie ergodique des fractions rationnelles sur un corps ultram{\'e}trique.
\newblock {\em Proceedings of the London Mathematical Society}, 100(1):116--154, 2009.

\bibitem{Kiwi_cubic}
Jan Kiwi.
\newblock Puiseux series polynomial dynamics and iteration of complex cubic polynomials.
\newblock {\em Annales de l'Institut Fourier}, 56(5):1337--1404, 2006.

\bibitem{Kiwi:2014nr}
Jan Kiwi.
\newblock Puiseux series dynamics of quadratic rational maps.
\newblock {\em Israel Journal of Mathematics}, 201(2):631--700, 2014.

\bibitem{article}
Y\^{u}suke Okuyama.
\newblock On a degenerating limit theorem of demarco\&faber.
\newblock {\em Publications of the Research Institute for Mathematical Sciences}, 60:71--107, 10 2024.

\bibitem{okuyama2024intrinsicreductionsintrinsicdepths}
Y{\^u}suke~Okuyamasuke Okuyama.
\newblock The intrinsic reductions and the intrinsic depths in non-archimedean dynamics, 2024.

\bibitem{Shafarevich1966LecturesOM}
Igor~R. Shafarevich and C.~P. Ramanujam.
\newblock Lectures on minimal models and birational transformations of two dimensional schemes.
\newblock 1966.

\end{thebibliography}
\end{document}